\newcommand{\ninf}{\underset{n\to\infty}{\longrightarrow}}
\newcommand{\mc}{\mathcal}
\newcommand{\f}{\frac}
\newcommand{\ste}{\, ;\,}
\newcommand{\ds}{\displaystyle}
\newcommand{\vfi}{\varphi}
\newcommand{\A}{\mathcal{A}}
\newcommand{\iy}{\ensuremath{\infty}}
\newcommand{\R}{\ensuremath{\mathbf{R}}}
\newcommand{\C}{\ensuremath{\mathbf{C}}}
\newcommand{\N}{\ensuremath{\mathbf{N}}}
\newcommand{\Z}{\ensuremath{\mathbf{Z}}}
\newcommand{\indic}{\ensuremath{\mathbf{1}}}
\renewcommand{\phi}{\varphi}
\renewcommand{\leq}{\leqslant}
\renewcommand{\geq}{\geqslant}
\newcommand{\mcP}{\mathcal{P}}
\newcommand{\mcA}{\mathcal{A}}
\renewcommand{\k}{\kappa} 
\newcommand{\card}{\sharp}
\newcommand{\id}{{id}}
\newcommand{\scalar}[2]{\langle #1 , #2 \rangle}
\newtheorem{theo}{Theorem}
\newtheorem{pr}{Proposition}
\newtheorem{lemma}{Lemma}
\newtheorem{rk}{Remark}
\newtheorem{cor}{Corollary}
\long\def\symbolfootnote[#1]#2{\begingroup
\def\thefootnote{\fnsymbol{footnote}}\footnote[#1]{#2}\endgroup} 
\author[F.~Benaych-Georges]{Florent BENAYCH-GEORGES}
\address{LPMA, UPMC Univ Paris 6, Case courier 188, 4, Place Jussieu, 75252 Paris Cedex 05, France. } \email{florent.benaych@gmail.com}
\author[I.~Nechita]{Ion NECHITA}
\address{Universit\'e de
Lyon, Universit\'e de Lyon 1, Institut Camille Jordan, CNRS UMR
5208, 43, Boulevard du 11 novembre 1918, 69622 Villeurbanne Cedex,
France} 
\email{nechita@math.univ-lyon1.fr}
\keywords{free probability, limit theorems, combinatorics of partitions,  
 stochastic measures} 
\subjclass[2000]{Primary 46L54, 
60F05, 
Secondary 05A05, 
 05A18, 
   60H05 
 }
\begin{document}
\title{A permutation model for free random variables and its classical analogue}

\begin{abstract}
In this paper, we generalize a permutation model for free random variables which was first proposed by Biane in \cite{biane}. We also construct its classical probability analogue, by replacing the group of permutations with the group of subsets of a finite set endowed with the symmetric difference operation. These constructions provide new discrete approximations of the respective free and classical Wiener chaos. As a consequence, we obtain explicit examples of non random matrices which are asymptotically free or independent. The moments and the free (resp. classical) cumulants of the limiting distributions are expressed in terms of a special subset of (noncrossing) pairings. At the end of the paper we present some combinatorial applications of our results.
\end{abstract}

\maketitle

\section*{Introduction}
Free probability is the non-commutative probability theory built upon the notion of independence called freeness.
In classical probability theory, independence characterizes families of random variables whose joint distribution can be deduced from the individual ones by making their tensor product. In the same way, freeness, in free probability theory, characterizes families of random variables whose joint distribution can be deduced from the individual ones by making their free product (with the difference that free random variables belong to non commutative probability spaces, and that their joint distribution is no longer   a probability measure, but  a linear functional on a space of polynomials). Concretely, independent random variables are numbers arising randomly with no influence on each other, whereas free random variables are elements of an operator algebra endowed with a state which do not satisfy any algebraical relation together, as far as what can be observed with the algebra's state is concerned. Free probability theory has been a very active field of mathematics during the last two decades, constructed in a deep analogy with classical probability theory. It follows that there is a kind of dictionary between objects of both theories:  many fundamental notions or results of classical probability theory, like the Law of Large Numbers, Central Limit Theorem, Gaussian distribution, convolution, cumulants, infinite divisibility have a precise analogue in free probability theory. Moreover, several examples of asymptotically free random variables have been found, like random matrices \cite{voic2,voic1,hiai,HT05},  representations of groups \cite{bibi95.01,bibi98.01}, and  a permutation model of P. Biane \cite{biane}. 
In the present paper, we shall firstly generalize this permutation model, and then develop  its analogue from classical probability theory, which will allow us to show that surprisingly,  in the "dictionary" mentioned above between classical and free probability theories, there is  a  correspondence (of minor importance when compared to others, but still interesting) between groups of sets endowed with the symmetric difference operation and groups of permutations,   following from the correspondence between the lattice of partitions and the lattice of non crossing partitions. 

To explain how we construct this model and its analogue from classical probability theory, let us recall a few basic definitions of non commutative probability theory. First of all, let us recall that a {\it non commutative probability space} (as we shall use it) is a complex unital $*$-algebra $\A$ endowed with with   a linear form $\varphi$  such that $\varphi(1)=1$  and for all $x\in \A$, $\vfi(x^*)=\overline{\vfi(x)}$ and $\vfi(xx^*)\geq 0$. The {\it non commutative distribution} of a family $(x_i)_{i\in I}$ of self-adjoint elements of $\A$ is then the application which maps any polynomial $P$ in the non commutative variables $(X_i)_{i\in I}$ to $\vfi(P((x_i)_{ i\in I}))$. This  formalism is the one of free probability theory, but it recovers the one of classical probability theory, because if the algebra $\A$ is commutative, then this distribution is actually the integration with respect to a probability measure on $\R^I$, and $\A$ and $\vfi$ can respectively be identified with a subalgebra of the intersection of the $L^p$ spaces ($p\in [1,+\infty)$) of a certain probability space and with the integration with respect to the probability measure of this probability space. A general example of non commutative probability space of historical importance is, given a countable  group $G$, the $*$-algebra $\C[G]=\C^{(G)}$ 
(the set of finitely supported complex functions on $G$),  endowed with  the notion of adjoint defined by $(\sum_{g\in G}x_g.g)^*=\sum_{g\in G}\overline{x_g}.g^{-1}$ and with the  trace $\vfi(\sum_{g\in G}x_g.g)=x_e$, where $e$ denotes the neutral element of $G$.   
Our asymptotic model for free random variables is constructed in the algebra of the group $\mathcal{S}$ of permutations with finite support of the set of nonnegative integers, whereas its classical probability theory analogue is constructed in the algebra of the group 
$\mathcal{G}$
of finite sets of nonnegative integers endowed with the symmetric difference operation. More 
precisely,
let us define, for all integer $r\geq 1$, and $t\in [0,+\infty)$, the element of $\C[\mathcal{S}]$ 
\[M_r(n,t) = \frac{1}{n^{r/2}}\;\;\; \sum\underbrace{(0 a_1 a_2 \cdots a_r)}_{\substack{\textrm{designs the cycle}\\ 0\to a_1\to a_2\to\cdots \to a_r\to 0}},\]where the sum runs over all $r$-uplets $(a_1,\ldots, a_r)$ of pairwise distinct integers of $[1, nt]$.  In \cite{biane}, it was already proved that the non commutative distribution of the family $(M_1(n,t))_{  t\in [0,+\infty)}$ converges, as $n$ goes to infinity, to the one of a family $(M_1(t))_{  t\in [0,+\infty)}$ which is a free Brownian motion. Here, we shall prove that the non commutative distribution of the family $(M_r(n,t))_{r\geq 1, t\in [0,+\infty)}$ converges, as $n$ goes to infinity, to the one of a family  $(M_r(t))_{ r\geq 1, t\in [0,+\infty)}$ such that    that for all $r,t$, one has $M_r(t)=t^{\f{r}{2}}U_r(t^{-1/2}M_1(t))$, where the $U_r$'s are the Chebyshev polynomials of second kind.

 
The classical probability analogue of this model is constructed 
 replacing the group $\mathcal{S}$ of finitely-supported permutations of the set of nonnegative integers by the group $\mathcal{G}$ of finite sets of nonnegative integers, endowed with the symmetric difference operation (the {\it symmetric difference} $A\Delta B$ of two sets  $A$ and $B$ is  $(A\cup B)\backslash (A\cap B)$), we define, 
 for all integer $r\geq 1$, and $t\in [0,+\infty)$, the element of $\C[\mathcal{G}]$ 
\[L_r(n,t) = \frac{1}{n^{r/2}} \sum\{ a_1, a_2, \ldots ,a_r\},\]where the sum runs over all $r$-uplets $(a_1,\ldots, a_r)$ of pairwise distinct integers of $[1, nt]$.  We shall prove that the non commutative distribution of the family $(L_r(n,t))_{r\geq 1, t\in [0,+\infty)}$ converges, as $n$ goes to infinity, to the one of a family  $(L_r(t))_{ r\geq 1, t\in [0,+\infty)}$ such that  $(L_1(t))_{t\in[0,+\infty)}$ is a classical Brownian motion  and that for all $r,t$, one has $L_r(t)=t^{\f{r}{2}}H_r(t^{-1/2}L_1(t))$, where the $H_r$'s are the Hermite polynomials. 

This model is  constructed on a commutative algebra, hence the joint distribution of the family $(L_r(n,t))_{r\geq 1, t\in [0,+\infty)}$ is the one of a family of classical random variables, and the same holds for the family $(L_r(t))_{r\geq 1, t\in [0,+\infty)}$. This last process is well known in classical probability theory: up to a renormalization, it is the {\it Wiener chaos}  \cite{janson, nualart, nourdin}. Our model provides a new discrete approximation of  the random process $(L_r(t))_{r\geq 1, t\in [0,+\infty)}$. The Wiener chaos is the starting point of a wide construction of stochastic processes, called the {\it stochastic measures} \cite{rota, nualart, peccati}. In  a forthcoming paper, we shall prove that
both our classical and free models can be generalized to this setting.

Let us now go back to the free model and explain how one can obtain non-random asymptotically free matrices. 
From the results stated earlier,
there emerges the general idea that duly renormalized  elements of $\C[\mc{S}]$ of the type $$A(n):=\!\!\!\!\!\!\!\!\!\!\!\!\!\sum_{\substack{a_1,\ldots, a_{r_a}\\ \textrm{in a set of size $n$}}} \!\!\!\!\!\!\!\!\!\!\!\!\!(0a_1\cdots a_{r_a}),\quad B(n):=\!\!\!\!\!\!\!\!\!\!\!\!\!\sum_{\substack{b_1,\ldots, b_{r_b}\\ \textrm{in a set of size $n$}}}\!\!\!\!\!\!\!\!\!\!\!\!\!(0b_1\cdots b_{r_b}),\quad C(n):=\!\!\!\!\!\!\!\!\!\!\!\!\!\sum_{\substack{c_1,\ldots, c_{r_c}\\ \textrm{in a set of size $n$}}} \!\!\!\!\!\!\!\!\!\!\!\!\!(0c_1\cdots c_{r_c}), \text{ etc.} $$ are asymptotically free as $n$ goes to infinity if the respective sets where the $a_i$'s, the $b_i$'s and the $c_i$'s are picked from are pairwise disjoint, and that in this result,  asymptotic freeness is replaced by  asymptotic independence 
if the group $\mc{S}$ of permutations is replaced by the one of finite sets endowed with the symmetric difference operation and cycles $(0x_1\cdots x_r)$ are replaced by sets $\{x_1,\ldots, x_r\}$. 

Let us now comment on Biane's original motivation for this construction. His idea (for $r=1$) easily generalizes for arbitrary $r$. As before, consider a finite set of elements $A(n), B(n), C(n)$, etc. of the group algebra $\C[\mathcal S_N]$, which is possible for $N$ large enough. When viewed as operators on $\mathcal S_N$,  $A(n), B(n), C(n)$, etc. are complex matrices with rows and columns indexed by the elements of $\mathcal S_N$ (these matrices can be seen as   the adjacency matrices of some Cayley graphs). This is the reason why these results provide explicit examples of  asymptotically free families of non random matrices. To our knowledge, there are no other such constructions. The classical probability part of our result also provides an explicit example of commutative family of non random matrices which are asymptotically independent, property   that only random matrices had until now been proved to have. 

In the last part of this paper, 
we shall  explore  connections between several combinatorial structures and the sets of non crossing pairings  which appeared in the formulas of moments and free cumulants in the limit theorems presented above.

\section{The permutation model for free random variables}
\subsection{Computation of the limit distribution}
The non-commutative probability space  we are going to work with is  the group algebra $\C[\mc{S}]$ of the group $\mc{S}$ of finitely-supported permutations of the set of nonnegative integers (i.e. permutations for which all but finitely-many points are fixed points),  
with its canonical trace defined by $\phi(\sum_\sigma x_\sigma \sigma) = x_\id$, where $\id$ is the identity permutation. Let us define, for all integer $r\geq 1$, and $t\in [0,+\infty)$, the element of $\C[\mathcal{S}]$ 
\[M_r(n,t) = \frac{1}{n^{r/2}} \sum(0 a_1 a_2 \cdots a_r),\]where the sum runs over all $r$-uplets $(a_1,\ldots, a_r)$ of pairwise distinct integers of $[1, nt]$.  For $r=0$, we put $M_0(n,t) = \id$.
Our purpose in what follows is to study the asymptotic properties (in the limit $n \to \iy$) of the family $(M_r(n,t))_{r,t}$.

Before stating the main result of this section, let us recall that a free Brownian motion is a process $(S_t)_{t\in [0+\infty)}$ of non commutative random variables with free increments such that for all $t$, $S_t$ is semi-circular with variance $t$. Let us also recall some facts about the Chebyshev polynomials of the second kind, denoted by $(U_n)$.
These are the orthogonal polynomials on $[-2, 2]$ with respect to the semi-circular weight $w(x) = \frac{1}{2\pi}\sqrt{4-x^2}$. They satisfy the property \[U_n(2\cos \theta) = \frac{\sin(n+1)\theta}{\sin \theta}, \quad \forall n \geq 0\]
 and the recurrence relation
\[U_0(x) = 1, \quad
U_1(x) = x, \quad U_1(x)U_n(x) = U_{n-1}(x) + U_{n+1}(x), \forall n\geq 1.\]

\begin{theo}\label{lim.distri.12.01.08}The non commutative distribution of the family $(M_r(n,t))_{r\geq 1, t\in [0,+\infty)}$ converges, as $n$ goes to infinity, to the one of a family  $(M_r(t))_{ r\geq 1, t\in [0,+\infty)}$ such that   $(M_1(t))_{  t\in [0,+\infty)}$  is a free Brownian motion and for all $r,t$, one has $M_r(t)=t^{\f{r}{2}}U_r(t^{-1/2}M_1(t))$, where the $U_r$'s are the Chebyshev polynomials of second kind.
\end{theo}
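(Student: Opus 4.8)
The plan is to compute, for every $k\geq1$, all integers $r_1,\dots,r_k\geq1$ and all $t_1,\dots,t_k\in[0,\infty)$, the limit of the mixed moment $\phi\big(M_{r_1}(n,t_1)\cdots M_{r_k}(n,t_k)\big)$, and to recognise it as the corresponding mixed moment of the announced family. Since $\phi$ reads off the coefficient of $\id$, expanding the product rewrites this moment as
\[
\frac{1}{n^{R/2}}\,\#\Big\{(\vec a^{(1)},\dots,\vec a^{(k)}):\ \vec a^{(j)}\in[1,nt_j]^{r_j}\text{ has pairwise distinct entries, and }\textstyle\prod_{j=1}^{k}(0\,a^{(j)}_1\cdots a^{(j)}_{r_j})=\id\Big\},
\]
with $R:=r_1+\cdots+r_k$, so the whole problem reduces to counting, with this normalisation, the index configurations for which the product of the $k$ cycles is the identity.

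The next step is to write each cycle as a product of transpositions through $0$, via $(0\,a_1\cdots a_r)=(0\,a_r)(0\,a_{r-1})\cdots(0\,a_1)$; concatenating, $\prod_j(0\,a^{(j)}_1\cdots a^{(j)}_{r_j})$ becomes a word $(0\,b_1)(0\,b_2)\cdots(0\,b_R)$ whose $R$ letters split into $k$ consecutive blocks, one per factor, inside which the values are pairwise distinct. I would classify configurations by the partition $\Pi$ of $\{1,\dots,R\}$ into the position-sets of the common values. Two elementary facts drive the estimate: if a value occurs only once the product moves it, so every block of $\Pi$ has size $\geq2$; and the number of configurations realising a given $\Pi$ is $O(n^{\#\Pi})$, each block of $\Pi$ carrying a single value in an interval of size $\leq n\max_jt_j$. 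Hence only the $\Pi$'s with $\#\Pi=R/2$, i.e.\ the \emph{pair partitions}, survive the division by $n^{R/2}$ (in particular $R$ must be even, as a sign obstruction also shows). For a pair partition $p$ the word $(0\,b_1)\cdots(0\,b_R)$ has every value appearing exactly twice, and the standard fact behind the semicircular case — such a product equals $\id$ iff $p$ is non-crossing (short induction on an interval pair one way, a genus/Euler-characteristic count the other) — singles out the contributing $p$. The two new features compared with Biane's $r=1$ case are: distinctness inside each block \emph{forbids} $p$ to pair two letters of the same factor; and a pair joining a letter of block $j$ to one of block $j'$ carries a value in $[1,n\min(t_j,t_{j'})]$, so the number of configurations realising such a $p$ is $n^{R/2}\prod_{\{x,y\}\in p}\min(t_{j(x)},t_{j(y)})+O(n^{R/2-1})$, where $j(x)$ denotes the factor containing the letter $x$. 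Summing over $p$ gives
\[
\lim_{n\to\infty}\phi\big(M_{r_1}(n,t_1)\cdots M_{r_k}(n,t_k)\big)=\sum_{p}\ \prod_{\{x,y\}\in p}\min\big(t_{j(x)},t_{j(y)}\big),
\]
the sum running over the non-crossing pair partitions $p$ of the $R$ letters that never pair two letters of the same factor.

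It then remains to identify this combinatorial quantity with the mixed moment of $\big(t^{r/2}U_r(t^{-1/2}M_1(t))\big)_{r,t}$ for $M_1$ a free Brownian motion. Rather than matching the two families of pairings head-on, I would invoke the Chebyshev recursion: a direct computation in $\C[\mc S]$, multiplying a transposition by a cycle, yields the exact identity
\[
M_1(n,t)\,M_r(n,t)=M_{r+1}(n,t)+\frac{\lfloor nt\rfloor-(r-1)}{n}\,M_{r-1}(n,t)+E_r(n,t),\qquad r\geq1,
\]
where $E_r(n,t)$ collects, with the normalisation $n^{-(r+1)/2}$, the products $(0\,a_1\cdots a_{i-1})(a_i\cdots a_r)$ for $1\leq i\leq r-1$, which carry a floating cycle of length $\geq2$ not through $0$. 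Since this mirrors $U_1U_r=U_{r-1}+U_{r+1}$, an induction gives $M_r(n,t)=t_n^{r/2}U_r\big(t_n^{-1/2}M_1(n,t)\big)$ modulo a finite combination of terms of two harmless kinds — the factors $E_s(n,u)$, and polynomials in $M_1(n,u)$ with $O(1/n)$ coefficients — where $t_n=\lfloor nt\rfloor/n\to t$. Granting that every mixed moment containing at least one such harmless factor tends to $0$, the mixed moments of $(M_r(n,t))_{r,t}$ agree asymptotically with those of $\big(t_n^{r/2}U_r(t_n^{-1/2}M_1(n,t))\big)_{r,t}$; since $(M_1(n,t))_t$ converges to a free Brownian motion $(M_1(t))_t$ by \cite{biane} and the $U_r$ are fixed polynomials, the limit is the non-commutative distribution of $\big(t^{r/2}U_r(t^{-1/2}M_1(t))\big)_{r,t}$, which is the assertion; as a by-product, the non-crossing-pairing sum above equals this moment, yielding the combinatorial formula announced in the introduction.

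The main obstacle is exactly this negligibility claim. It is once more a genus-type count: a floating cycle resting on $\geq2$ fresh labels can only be returned to $\id$ through coincidences with the other factors of the moment, and any such coincidence costs at least one power of $n$ relative to the dominant pair-partition configurations of the second step; together with the a priori boundedness of all moments (itself a consequence of the $O(n^{\#\Pi})$ bound above) this kills the $E_s$'s and the $O(1/n)$-polynomials. Making this uniform over arbitrary words in the $M_r(n,\cdot)$'s — along with the bookkeeping of the distinguished point $0$, which lies in every cycle, and of the $\min(t_j,t_{j'})$ weights — is the technical core; everything else is the dictionary between non-crossing pair partitions and the Chebyshev / free-Brownian-motion side.
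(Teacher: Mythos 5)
Your core route is the same as the paper's: invoke Biane's theorem for $(M_1(n,t))_t$, expand $M_1(n,t)M_r(n,t)$ in $\C[\mc S]$ to get exactly the identity you write, $M_1(n,t)M_r(n,t)=M_{r+1}(n,t)+\frac{\lfloor nt\rfloor-(r-1)}{n}M_{r-1}(n,t)+E_r(n,t)$, show the error is asymptotically negligible, and let the Chebyshev recurrence $U_1U_r=U_{r-1}+U_{r+1}$ identify $M_r(t)=t^{r/2}U_r(t^{-1/2}M_1(t))$; this is Steps I--IV of the paper. Two remarks on where you diverge. First, your opening computation of all mixed moments via non-crossing pairings weighted by $\min(t_{j},t_{j'})$ is correct but not needed for this theorem; it is essentially the content of Theorem \ref{th:free} (and of the classical Lemma \ref{12.01.08.20h47}), and in your write-up it ends up being a by-product rather than an ingredient. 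Second, the step you yourself flag as the ``technical core'' --- that every mixed moment containing an $E_s$ factor or an $O(1/n)$ coefficient vanishes --- is exactly where the paper is slicker: it never counts over arbitrary words, but proves the single $L^2$ estimate $\phi[(M_1(n,t)M_r(n,t)-tM_{r-1}(n,t)-M_{r+1}(n,t))^2]\to 0$ (a one-line count: for the product of two error words to be the identity, one word must be the inverse of the other, which pins down the second tuple up to $O(1)$ choices, giving at most $(r-1)r!(nt)^r$ identity terms against the normalization $n^{r+1}$), and then kills error factors inside arbitrary monomials by Cauchy--Schwarz combined with the boundedness of lower-order mixed moments, obtained by induction on the largest index $R$ occurring in the monomial. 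Your direct-counting alternative is completable, and more simply than your ``genus-type'' sketch suggests: each $E_s(n,u)$ carries normalization $n^{-(s+1)/2}$ but only $s$ summation variables, and in any expanded word a value occurring exactly once (in exactly one cycle) is moved by the product, so identity configurations number $O(n^{V/2})$ with $V$ the total number of variables, whence any mixed moment with $e\geq 1$ error factors is $O(n^{-e/2})$ and pure moments are $O(1)$; but as written this is only asserted, so you should either carry out that count or adopt the paper's Cauchy--Schwarz device to make the argument complete.
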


\begin{proof}{\it Step I. }  It follows from a direct application of Theorem 1 of \cite{biane} that the non commutative distribution of the family $(M_1(n,t))_{  t\in [0,+\infty)}$ converges, as $n$ goes to infinity, to the one of a family $(M_1(t))_{  t\in [0,+\infty)}$ which is a free Brownian motion. 

{\it Step II. } Let us prove that for all integer $r \geq 1$, and $t\in (0,+\infty)$, 
\begin{equation}\label{07.01.08.1}\lim_{n \to \iy} \phi[(M_1(n,t)M_r(n,t) - tM_{r-1}(n,t) - M_{r+1}(n,t))^2] = 0.\end{equation}
We first compute $M_1(n,t)M_r(n,t)$:

\begin{eqnarray*}M_1(n,t)M_r(n,t) &=& n^{- \frac{r+1}{2}}\!\!\!\!\!\!\sum_{\substack{(a_1, \ldots, a_r)\\ (a_{r+1})}}\!\!\!\!\!\! (0a_{r+1})(0a_1a_2 \cdots a_r)\\
&=& n^{- \frac{r+1}{2}} \!\!\!\!\!\!\!\!\sum_{(a_1, \ldots, a_{r+1})}\!\!\!\!\!\!\!\! (0a_1a_2 \cdots a_ra_{r+1}) + n^{- \frac{r+1}{2}} \sum_{k=1}^{r}\sum_{(a_1, \ldots, a_r)} (0a_1a_2 \cdots a_{k-1})(a_k \cdots a_r)\\ &=&
M_{r+1}(n,t) + \f{\lfloor nt\rfloor}{n}M_{r-1}(n,t) + n^{- \frac{r+1}{2}} \sum_{k=1}^{r-1}\sum_{(a_1, \ldots, a_r)} (0a_1a_2 \cdots a_{k-1})(a_k \cdots a_r).
\end{eqnarray*}
Thus, it suffices to show that ($a=(a_1, \ldots, a_r), b=(b_1, \ldots, b_r)$)
\[\lim_{n \to \iy} \phi[(n^{- \frac{r+1}{2}} \sum_{k=1}^{r-1}\sum_a (0a_1a_2 \cdots a_{k-1})(a_k \cdots a_r))^2] = 0.\]
But
\[\left(\sum_{k=1}^{r-1}\sum_a (0a_1a_2 \cdots a_{k-1})(a_k \cdots a_r)\right)^2 = \sum_{k,l=1}^{r-1}\sum_{a, b} (0a_1a_2 \cdots a_{k-1})(a_k \cdots a_r)(0b_1b_2 \cdots b_{l-1})(b_l \cdots b_r)\]
In order for the permutation on the right-hand side to be the identity, it has to be that
\[(0b_1b_2 \cdots b_{l-1})(b_l \cdots b_r) = [(0a_1a_2 \cdots a_{k-1})(a_k \cdots a_r)]^{-1} = (a_ka_ra_{r-1} \cdots a_{k+1})(0a_{k-1} \cdots a_1)\]
and thus $k=l$ and the $b$'s are determined (modulo some circular permutation of size at most $r$) by the $a$'s. We find that there are at most $(r-1)r!(nt)^r$ terms in the sum which are equal to the identity and \eqref{07.01.08.1} follows.

{\it Step III. } To prove the existence of a limit to the non commutative distribution of  the family $(M_r(n,t))_{r\geq 1, t\in [0,+\infty)}$, we have to prove that for all polynomial $P$ in the non commutative variables $(X_r(t))_{r\geq 0,t\in [0,+\infty)}$,$$\vfi(P((M_r(n,t))_{r\geq 0,t\in [0,+\infty)}))$$   has a finite limit as $n$ goes to infinity.
First of all, by linearity,  can suppose that $P$ is a monomial $X_{r_1}(t_1)\cdots X_{r_k}(i_k)$ with $r_1,\ldots, r_k\geq 0, t_1,\ldots, t_k\in [0,+\infty) $. Then let us prove it by induction on $R:=\max\{r_1,\ldots, r_k\}$. If $R=0$ or $1$, it follows from the first step of the proof and the convention  $M_0(n,t)=1$. Now, let us suppose the result to be proved until rank $R-1$. Replacing, for all $t\in [0,+\infty)$, each $X_R(t)$ in $P$ by $$(X_1(t)X_{R-1}(t)-tX_{R-2}(t))-(X_1(t)X_{R-1}(t)-tX_{R-2}(t)-X_R(t))$$ and using the second  step of the proof with the Cauchy-Schwarz inequality, one gets the convergence. Let us denote the limit distribution by $\Psi : \C\langle X_r(t)\ste  r\geq 0,t\in [0,+\infty)\rangle\to \C$. 

 {\it Step IV. } Now, it remains only to identify the limit distribution. Note first that by the first step and the convention  $M_0(n,t)=1$, the Cauchy-Schwarz inequality allows us to claim that the bilateral ideal generated by $$\{X_0(t)-1\ste t\in [0,+\infty) \}\cup \{X_1(t)X_{m-1}(t)-tX_{m-2}(t)-X_m(t)\ste m\geq 2, t\in [0,+\infty)\}$$ is contained in the kernel of $\Psi$. Hence up to a quotient of the algebra $\C\langle X_r(t)\ste r\geq 0, t\in [0,+\infty)\rangle$, one can suppose that for all $m\geq 2$, $t\in [0,+\infty)$, $X_0(t)=1$ and $ X_1(t)X_{m-1}(t)=tX_{m-2}(t)+X_m(t)$. It allows us to claim that for all  $m\geq 0$,  $t\in [0,+\infty)$, $X_m(t)$ is a polynomial in $  X_1(t)$, namely that $X_m(t)=t^{\f{m}{2}}U_m(t^{-1/2}X_1(t))$, where  the $U_m$'s are the Chebyshev polynomials of second kind (indeed, this family is completely determined by the fact that $U_0=1, U_1=X$ and for all $m\geq 2$, $U_1U_{m-1}=U_{m-2}+U_m$). Since by the first step, $(M_1(t))_{  t\in [0,+\infty)}$  is a free Brownian motion,
 the proof is complete.
\end{proof}

The following corollary generalizes  Theorem 1 of \cite{biane}. Roughly speaking, it states that duly renormalized  elements of $\C[\mc{S}]$ of the type $$A(n):=\!\!\!\!\!\!\!\!\!\!\!\!\!\sum_{\substack{a_1,\ldots, a_{r_a}\\ \textrm{in a set of size $n$}}} \!\!\!\!\!\!\!\!\!\!\!\!\!(0a_1\cdots a_{r_a}),\quad B(n):=\!\!\!\!\!\!\!\!\!\!\!\!\!\sum_{\substack{b_1,\ldots, b_{r_b}\\ \textrm{in a set of size $n$}}}\!\!\!\!\!\!\!\!\!\!\!\!\!(0b_1\cdots b_{r_b}),\quad C(n):=\!\!\!\!\!\!\!\!\!\!\!\!\!\sum_{\substack{c_1,\ldots, c_{r_c}\\ \textrm{in a set of size $n$}}} \!\!\!\!\!\!\!\!\!\!\!\!\!(0c_1\cdots c_{r_c}), \text{ etc.} $$ are asymptotically free as $n$ goes to infinity if the respective sets where the $a_i$'s, the $b_i$'s and the $c_i$'s are picked are pairwise disjoint. Biane had proved it in the case where $r_a=r_b=r_c=\cdots=1$.

 \begin{cor}\label{ok-decay.13.01.08}Fix $p\geq 1$, $r_1,\ldots, r_p\geq 0$, $t_0<t_1<\cdots <t_p$, and define, for all $i=1,\ldots, p$, for each $n\geq 1$, $M_i(n)=\ds n^{-\f{r_i}{2}}\sum (0a_1\cdots a_{r_i})$, where the sum runs over all $r_i$-uplets $(a_1,\ldots, a_{r_i})$ of pairwise distinct integers of $(nt_{i-1},nt_i]$.  Then $M_1(n),\ldots, M_p(n)$ are asymptotically free as $n$ goes to infinity.
\end{cor}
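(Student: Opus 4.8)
The plan is to reduce the statement to a computation of mixed moments, following the strategy that is standard for proving asymptotic freeness of non-commutative random variables: it suffices to show that for any choice of indices $i_1 \neq i_2 \neq \cdots \neq i_m$ in $\{1,\ldots,p\}$ (consecutive indices distinct) and any exponents $k_1,\ldots,k_m \geq 1$, the quantity
\[
\varphi\bigl[(M_{i_1}(n)^{k_1} - \varphi(M_{i_1}(n)^{k_1}))\cdots (M_{i_m}(n)^{k_m} - \varphi(M_{i_m}(n)^{k_m}))\bigr]
\]
tends to $0$ as $n \to \infty$. Since each $M_i(n)$ is self-adjoint and, by Theorem \ref{lim.distri.12.01.08} applied with $t$ replaced by the interval $(t_{i-1},t_i]$ (the proof there works verbatim for cycles whose non-zero entries lie in any fixed interval of length $\asymp n$), the individual distribution of each $M_i(n)$ converges, the powers $M_i(n)^{k}$ have bounded moments and are asymptotically polynomial in a semicircular element; so one may equivalently work directly with alternating products of the generators $M_i(n)$ themselves, centering appropriately, and it is enough to control a single alternating centered moment.

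The key step is the combinatorial estimate on the trace. Expanding each $M_i(n)^{k_i}$ as a normalized sum of products of cycles $(0a_1\cdots a_{r_i})$ with entries in $(nt_{i-1},nt_i]$, one gets $\varphi$ of a huge sum of products of cycles; the trace $\varphi$ counts exactly those products that equal the identity permutation. The crucial point — exactly as in Step II of the proof of Theorem \ref{lim.distri.12.01.08} — is that when cycles $(0\,a_1\cdots a_{r})$ coming from \emph{different} blocks $i$ are multiplied, the supports of these cycles (other than the common fixed point $0$) lie in the pairwise disjoint intervals $(nt_{i-1},nt_i]$, so a cancellation forcing a product to be the identity can only occur between cycles belonging to the \emph{same} block. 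This means that, after centering, the leading-order contributions within each block are killed, while any surviving term must match indices across blocks, which forces a strictly sub-maximal number of free summation indices; one then checks by the same counting argument as in Theorem \ref{lim.distri.12.01.08} (each forced coincidence of an $a$-index costs a factor $n^{-1}$, while each normalization $(0a_1\cdots a_r)$ carries $n^{-r/2}$ against at most $(nt)^r$ choices) that the whole centered alternating moment is $O(n^{-1/2})$, hence vanishes.

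I expect the main obstacle to be bookkeeping rather than conceptual: one must set up the exponent count carefully when the $k_i$ are larger than $1$, since then products of several cycles from the same block appear and the argument from Step II (which only handled $M_1 M_r$) must be extended to arbitrary words in the $M_i$'s. The clean way is to introduce the permutation-theoretic genus/defect — for a product of cycles $c_1 \cdots c_N$ sharing the fixed point $0$, the number of summation indices that survive once the product is constrained to be the identity is governed by a non-crossing (planar) matching condition on the cycles — and to observe that mixing two distinct blocks can never produce a planar matching of maximal size, because the disjointness of the intervals forbids any block-$i$ index from being identified with a block-$j$ index. Granting this structural lemma (which is implicit in Biane's work \cite{biane} and in Step II above), the corollary follows; alternatively, one can cite the general principle that asymptotic freeness follows once all mixed centered moments of alternating words vanish, and carry out the estimate only for such words, which keeps the combinatorics manageable.
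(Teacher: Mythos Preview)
Your proposal takes a very different route from the paper, and in doing so contains a real logical confusion.

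The paper's proof is two lines: set $S_i(n)=n^{-1/2}\sum_{a\in(nt_{i-1},nt_i]}(0a)$; by Theorem~\ref{lim.distri.12.01.08} (free Brownian motion increments) the family $(S_1(n),\ldots,S_p(n))$ converges in distribution to a free family $(s_1,\ldots,s_p)$ of semicirculars, and the same theorem gives $\varphi\bigl[(M_i(n)-(t_i-t_{i-1})^{r_i/2}U_{r_i}((t_i-t_{i-1})^{-1/2}S_i(n)))^2\bigr]\to 0$. Hence $(M_1(n),\ldots,M_p(n))$ converges jointly to a family of polynomials in the free variables $s_1,\ldots,s_p$, one polynomial per $s_i$, which are therefore free. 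No combinatorics is redone; everything is inherited from Theorem~\ref{lim.distri.12.01.08} and Biane's $r=1$ result.

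Your direct approach is not wrong in spirit, but the mechanism you describe is backwards. You write that ``after centering, the leading-order contributions within each block are killed, while any surviving term must match indices across blocks, which forces a strictly sub-maximal number of free summation indices.'' But cross-block matching is \emph{impossible}: the intervals $(nt_{i-1},nt_i]$ are disjoint, so no $a$-index from block $i$ can equal one from block $j\neq i$. There are no such ``surviving cross-block terms'' at all. The actual mechanism is that every contributing non-crossing pairing (each pair necessarily monochromatic) must, because of the alternating condition $i_1\neq i_2\neq\cdots$, have every pair contained in a single segment of the word; one sees this by looking at an innermost pair $\{k,k+1\}$, which must be monochromatic and hence cannot straddle two consecutive segments. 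Consequently the limiting mixed moment \emph{factorizes} over the segments, and it is this factorization (not sub-leading cross terms) that makes the alternating centered moment vanish. Your sketch never states this factorization, and the ``structural lemma'' you invoke at the end is left entirely implicit. The reduction from polynomials $q_j(M_{i_j})$ to the generators themselves is also not justified. If you want to salvage the direct route, the honest thing is to prove that the limit moments of alternating words factor over the segments; but once you have Theorem~\ref{lim.distri.12.01.08} in hand, the paper's two-line argument is strictly simpler.
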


\begin{proof}Let us define, for all $i=1,\ldots, p$ and $n\geq 1$, $S_i(n):=\ds n^{-\f{1}{2}}\sum_{\substack{a\in (nt_{i-1},nt_i]\\ \textrm{$a$ integer}}} (0a)$. By the previous theorem,  as $n$ goes to infinity, the non commutative distribution of $(S_1(n),\ldots, S_p(n))$ tends to the one of a free family $(s_1,\ldots, s_p)$ of semi-circular elements (with various variances). Moreover, the same theorem says that for all $i$, as $n$ goes to infinity, $$\ds \lim_{n\to \infty}\vfi((M_i(n)-(t_i-t_{i-1})^{\f{r_i}{2}}U_{r_i}(\sqrt{t_i-t_{i-1}}S_i(n))^2)= 0.$$It follows that the non commutative distribution of the family $$(S_1(n), M_1(n),\ldots, S_p(n), M_p(n))$$ converges to the one of $$(s_1, (t_1-t_{0})^{\f{r_1}{2}}U_{r_1}(\sqrt{t_1-t_{0}}s_1),\ldots, s_p, (t_p-t_{p-1})^{\f{r_p}{2}}U_{r_1}(\sqrt{t_p-t_{p-1}}s_p),$$ which finishes the proof.
\end{proof}

\subsection{Moments and cumulants of the limit distribution}
We now turn to the moments and the free cumulants of the family $(M_r(t))_{ r\geq 1, t\in [0,+\infty)}$. As we shall see, these quantities have elegant closed expressions in terms of non-crossing pairings of a special kind. Let us now introduce the combinatorial objects of interest. For $f$ function defined on a finite set $X$, $\ker f$ designates the partition of $X$ by the level sets of $f$.  For every $p \geq 1$ and for every vector $r = (r_1, \ldots, r_p)$ of positive integers, consider the function $f_r:\{1, \ldots |r|\} \to \{1, \ldots p\}$ defined by $f_r(x) = k$ if and only if  $r_1+\cdots + r_{k-1} < x \leq r_1+\cdots +r_{k}$ (we put
$|r| = r_1 + \cdots + r_p$). We introduce the set $NC_2(r)$ of non-crossing pairings $\pi$ of the set $\{1, \ldots, |r|\}$ which do not link two elements who have the same image by $f_r$, i.e. such that $\pi \wedge \hat 1_r = \hat 0_{|r|}$, where $\hat 1_r=\ker f_r$ and $\hat 0_{|r|}$ is the singletons partition of $\{1, \ldots, |r|\}$. 
 We also introduce $NC_2^*(r) = \{\pi \in NC_2(r) | \pi \vee \hat 1_r = \hat 1_{|r|}\}$, where $\hat 1_{|r|}$ is the one-block-partition of $\{1, \ldots, |r|\}$. For $s$ positive integer, we denote with  $\langle s\rangle_p = (s, s, \ldots, s)$ the constant vector where $s$ appears $p$ times. 

In the following theorem,  we compute the  mixed moments and free cumulants of the family $(M_r)_{r\geq 1}=(M_r(1))_{r\geq 1}$ (the mixed moments and cumulants  of the family $(M_r(t))_{r\geq 1, t\in [0,+\infty)}$ can easily be computed in the same way).
 \begin{theo}\label{th:free}
The distribution of the family  $(M_r)_{r \geq 1}$ is characterized by the fact that its mixed moments are given by
	\[\phi(M_{r_1}M_{r_2} \cdots M_{r_p}) = \card NC_2 (r)\]
and its free cumulants are given by
	\[\k_p(M_{r_1}, M_{r_2}, \ldots ,M_{r_p}) = \card NC_2^* (r).\]	
\end{theo}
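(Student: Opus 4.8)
The plan is to establish the moment formula first and then deduce the cumulant formula from it via M\"obius inversion on the non-crossing partition lattice. For the moments, I would start from the concrete combinatorial model: $M_{r_1}\cdots M_{r_p}$, before renormalization, is a sum over tuples of cycles $(0a_1\cdots a_{r_i})$, and $\phi$ of this product counts (with the $n^{-|r|/2}$ weight) the number of choices of all the $a$-labels making the product of cycles equal to the identity. The natural bookkeeping device is to record, for a given choice of labels, the partition $\sigma$ of $\{1,\ldots,|r|\}$ whose blocks are the level sets of the map sending a position to the integer label sitting there; writing $f_r$ for the block structure coming from which factor a position belongs to, the constraint that the labels within one cycle are pairwise distinct says exactly $\sigma\wedge\hat 1_r=\hat 0_{|r|}$. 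A standard genus/Euler-characteristic count (as in the usual proof that Wick-type sums over $S_N$ localize on non-crossing pairings, or as implicitly used in Step II above) shows that the number of label choices compatible with a fixed $\sigma$ is $n^{|r|-\#\sigma+O(1)}$-ish, so that the $n^{-|r|/2}$ normalization forces $\#\sigma = |r|/2$ in the limit, i.e. $\sigma$ is a pairing, and among pairings only the non-crossing ones survive (the crossing ones produce a product of cycles that cannot be the identity, exactly the phenomenon exploited in Step II). This identifies the surviving $\sigma$'s as precisely $NC_2(r)$ and gives $\phi(M_{r_1}\cdots M_{r_p})=\card NC_2(r)$.

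A cleaner and more self-contained route, which I would actually prefer to write up, avoids re-doing this genus computation by bootstrapping off Theorem~\ref{lim.distri.12.01.08}: there we already know $M_r=r$-th Chebyshev polynomial evaluated at the semicircular family, with the convention $M_1(t)$ a free Brownian motion at $t=1$. So $\phi(M_{r_1}\cdots M_{r_p})=\phi\big(U_{r_1}(s)\cdots U_{r_p}(s)\big)$ where $s$ is standard semicircular. Now I would use the known combinatorial description of moments of products of Chebyshev polynomials in a semicircular variable: since $U_r(s)=\sum_{\pi}(\cdots)$ has an interpretation as a sum over non-crossing partial pairings of $r$ points with no ``short'' block, expanding each $U_{r_i}$ and using the Wick/non-crossing-pairing formula for $\phi$ of a product of semicirculars, one checks that the surviving terms are exactly indexed by $\pi\in NC_2(r)$. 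The key identity to verify is combinatorial: that the expansion of $\prod_i U_{r_i}(s)$ against the semicircular state reorganizes, after the usual cancellation of ``nested'' pairs inside a single $U_{r_i}$, into a clean count of non-crossing pairings that never pair two points inside the same block of $f_r$ — i.e. $\pi\wedge\hat 1_r=\hat 0_{|r|}$. This is where I expect the main work to lie, and it is essentially a reindexing/inclusion–exclusion argument matching the three-term recurrence of the $U_r$ with the recursive structure of $NC_2(r)$ (peeling off the block containing the last point of the last factor).

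For the cumulant formula, once the moments are known the strategy is the standard moment-cumulant inversion for free probability: $\phi(M_{r_1}\cdots M_{r_p})=\sum_{\tau\in NC(p)}\prod_{V\in\tau}\k_{|V|}\big((M_{r_i})_{i\in V}\big)$, so by M\"obius inversion $\k_p(M_{r_1},\ldots,M_{r_p})=\sum_{\tau\in NC(p)}\mu(\tau,\hat 1_p)\prod_{V\in\tau}\card NC_2(r|_V)$, where $r|_V$ is the subvector of $r$ indexed by $V$. The task is then to show this alternating sum equals $\card NC_2^*(r)$. The natural bijective bookkeeping: a pairing $\pi\in NC_2(r)$ induces, via $\pi\vee\hat 1_r$, a non-crossing partition of the $p$ factors; grouping $NC_2(r)$ according to the value of $\pi\vee\hat 1_r$ and noting that $NC_2$ of a disjoint union of blocks of factors factorizes as a product of the corresponding $NC_2$'s (because a non-crossing pairing compatible with $f_r$ and connecting only factors within one block of a non-crossing partition $\tau$ splits as an independent choice in each block), one gets $\card NC_2(r)=\sum_{\tau\in NC(p)}\prod_{V\in\tau}\card NC_2^*(r|_V)$ — exactly the moment-cumulant relation with $\k$ replaced by $\card NC_2^*$. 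Since the moment-cumulant relation over $NC(p)$ has a unique solution, comparing with the genuine cumulants forces $\k_p(M_{r_1},\ldots,M_{r_p})=\card NC_2^*(r)$. The main obstacle in this half is verifying the multiplicativity of $NC_2$ over the blocks of $\pi\vee\hat 1_r$ cleanly, i.e. that a non-crossing pairing refining into a given $\tau\in NC(p)$ at the level of factors really does decompose into independent non-crossing-pairing data on each block — this uses the non-crossing-ness of $\tau$ in an essential way and should be stated as a small lemma.

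Overall, the genuinely hard step is the passage from the analytic identification $M_r=U_r(\text{semicircular})$ to the combinatorial count $\card NC_2(r)$ — i.e. proving that expanding products of Chebyshev polynomials against the semicircular state produces precisely the pairings in $NC_2(r)$; everything after that (the cumulants) is bookkeeping with M\"obius inversion and the multiplicative structure of non-crossing partitions. An alternative that sidesteps Chebyshev expansions entirely is to go back to the permutation model and carry out the genus estimate directly on $\phi(M_{r_1}(n)\cdots M_{r_p}(n))$, which has the advantage of being uniform and giving both the moment identity and, after a parallel connectedness analysis, directly the $NC_2^*$ count for cumulants via the standard ``connected correlations $\leftrightarrow$ free cumulants'' heuristic made rigorous; I would likely present this direct-model approach as the primary proof and mention the Chebyshev reformulation as a remark.
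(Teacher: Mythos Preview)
Your proposal is correct, and the approach you ultimately settle on---the direct permutation-model computation for the moments (grouping label choices by their kernel partition, eliminating singletons and large blocks, then restricting to non-crossing pairings via Biane's argument) followed by the decomposition of $NC_2(r)$ according to the induced partition $\pi\vee\hat 1_r\in NC(p)$ and multiplicativity to extract the cumulants---is exactly the paper's proof. The Chebyshev-expansion alternative you sketch is not pursued in the paper and remains only an outline in your proposal, but since you designate the direct-model route as primary, your write-up would match the paper's.
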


\begin{rk}
Although they are clearly dependent, the elements $M_r$ are not correlated: 
$\phi(M_qM_r) = 0$
if $q \neq r$ (this follows from the orthogonality of the Chebyshev polynomials).
\end{rk}

\begin{rk}This theorem provides a new proof (even though there are already many !) of the formula of the free cumulants of the free Poisson distribution (also called Marchenko-Pastur distribution, see \cite{hiai}). Indeed, $M_2+1=M_1^2$ is well known to have a free Poisson distribution with mean $1$, all of whose cumulants except the first one the same as the free cumulants of $M_2$. By the theorem, for all $p\geq 2$, $\kappa_p(M_2)$ is the cardinality of $\{\pi \in NC_2(2p)| \pi \vee \hat 1_{\langle 2 \rangle_p} = \hat 1_{2p}\} $. In \cite{nica-speicher}, it is shown that
\[\{\pi \in NC(2p)| \pi \vee \hat 1_{\langle 2 \rangle_p} = \hat 1_{2p}\} = \{\pi \in NC(2p)| 1 \stackrel{\pi}{\sim}2p, 2i \stackrel{\pi}{\sim}2i +1, \forall i \in \{1, \ldots, p-1\}\}.\]
Thus, 
\[\{\pi \in NC_2(2p)| \pi \vee \hat 1_{\langle 2 \rangle_p} = \hat 1_{2p}\} = \{\,\{\,\{2p,1\},\,\{2,3\},\, \ldots, \,\{2p-2, 2p-1\}\,\}\,\},\]
which is a partition of $NC_2(\langle 2 \rangle_p)$,   hence $\k_p(M_2) = 1$.
\end{rk}
 
\begin{proof} Let us first prove that the mixed moments are given by the formula of the theorem. 
Using the identity $(0b_1b_2 \cdots b_s) = (0b_s)(0b_{s-1}) \cdots (0b_1)$, we have
\[\prod_{j=1}^{p}{M_{r_j}(n,1)} = n^{-\frac{|r|}{2}}\sum_{a}(0a_1)(0a_2) \cdots (0a_{|r|}),\]
where the sum is taken over all families $a = (a_1, \ldots a_{|r|}) \in \{1,\ldots, n\}^{|r|}$ such that for all $k,l \in \{1, \ldots |r|\}$,  $a_k \neq a_l$ whenever $f_r(k) = f_r(l)$. To such a family $a$ we associate the partition $\mcP(a)$ of the set $\{1, \ldots |r|\}$ defined by $k \sim l $ if and only if  $a_k = a_l$. Thus, for all $a$, $\mcP(a)$ does not link two elements that have the same image by $f_r$, i.e. satisfies  $\mcP(a) \wedge \hat 1_r = \hat 0_{|r|}$. We regroup the terms of the preceding sum according to the partitions $\mcP$:
\[\sum_\pi n^{-\frac{|r|}{2}}\sum_{a : \mcP(a) = \pi}(0a_1)(0a_2) \cdots (0a_{|r|}).\]
Let us show that among the partitions $\pi$ such that $\pi \wedge \hat 1_r = \hat 0_{|r|}$, the only partitions  that contribute to the limit,  as $n$ goes to infinity, 
are non-crossing pairings, i.e. elements of $NC_2(r)$. If $\pi = \mathcal P(a)$ contains a singleton $\{k\}$, then the permutation  $(0a_1)(0a_2) \cdots (0a_{|r|})$ cannot be the identity, because the element $a_k$ appears only once and thus its image cannot be itself. Consider now a partition $\pi$ with no singleton but with a class with at least three elements. It is easy to show that there are no more than $n^{\frac{|r|-1}{2}}$ families $a$ such that $\mcP(a) = \pi$ and thus they have no contribution asymptotically. We have shown that only pairings contribute to the trace. The argument in \cite{biane}, Lemma 2 (which adapts \emph{mutatis mutandis} to our case) shows that only the non-crossing pairings contribute, completing the proof.
 
Let us now compute the free cumulants. 
To a pairing $\mcP \in NC_2(r)$ we associate the non-crossing partition $\bar \mcP \in NC(p)$ which encodes the way $\mcP$ links the blocks of $\hat 1_r$ : $k \stackrel{\bar \mcP}{\sim} l$ if and only if  $r_1+ \cdots + r_k \stackrel{\mcP \vee \hat 1_r}{\sim} r_1+ \cdots + r_l$, for all $k,l \in \{1, \ldots, p\}$.We have$$\phi((M_{r_1}M_{r_2} \cdots M_{r_p}) =\card  NC_2(r)=\sum_{\pi \in NC(p)} \card \{\mcP \in NC_2(r) | \bar \mcP = \pi\}.$$Since the functionals $NC(p) \ni \pi \mapsto  \card \{\mcP \in NC_2(r) | \bar \mcP = \pi\}$ are multiplicative, we have identified the free cumulants of the family $(M_r)_{r \geq 1}$:
\[\forall p\geq 1, r_1,\ldots, r_p\geq 1, \; \k_\pi(M_{r_1},M_{r_2}, \ldots, M_{r_p}) = \card \{\mcP \in NC_2(r) | \bar \mcP = \pi\}.\]
Considering the case $\pi = \hat 1_p$, we obtain the announced formula for the free cumulants.
\end{proof}

\subsection{An application: linearization coefficients for orthogonal polynomials}
As a corollary of Theorems \ref{lim.distri.12.01.08} and  \ref{th:free}, we recover some formulas already obtained in \cite{anshelevich} using different techniques. Consider a family $(P_n)$ of orthonormal polynomials with respect to some weight $w$. For an integer vector $r = (r_1, \ldots, r_p)$ there is a decomposition
\[P_{r_1}(x)P_{r_2}(x) \cdots P_{r_p}(x) = \sum_{k=0}^{|r|} c_k^{(r)} P_k(x),\]
where the scalars $c_k^{(r)} \in \R$ are called \emph{linearization coefficients} of the family $(P_n)$. They can easily be recovered by integration:
\[c_k^{(r)} = \int P_{r_1}(x)P_{r_2}(x) \cdots P_{r_p}(x) \cdot P_k(x) dw(x).\]
For the Chebyshev polynomials, these integrals are the expectation (the trace) of the corresponding products of the random variables $M_r$:
\begin{cor}\label{linearisation.Cheb}
The linearization coefficients for the Chebyshev polynomials of the second kind $U_n$are given by
\begin{equation}\label{cheb.02.02.09} c_k^{(r)} = \card NC_2(r \cup k),\end{equation}
where $r \cup k$ is the vector $(r_1, \ldots, r_p, k)$.
\end{cor}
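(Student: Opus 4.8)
The plan is to combine the moment formula from Theorem~\ref{th:free} with the fact from Theorem~\ref{lim.distri.12.01.08} that $M_r = U_r(M_1)$ (taking $t=1$, so $M_r(1) = U_r(M_1(1))$ with $M_1$ semi-circular of variance $1$, i.e. distributed according to the weight $w(x) = \frac{1}{2\pi}\sqrt{4-x^2}$ on $[-2,2]$). First I would observe that since $\phi$ restricted to the commutative algebra generated by $M_1$ is exactly integration against $w$, and since $M_r = U_r(M_1)$, we have for any integer vector $r = (r_1,\ldots,r_p)$ and any $k \geq 0$,
\[
\int U_{r_1}(x)\cdots U_{r_p}(x)\, U_k(x)\, dw(x) = \phi\big(M_{r_1}\cdots M_{r_p} M_k\big) = \phi\big(M_{r_1}\cdots M_{r_p} U_k(M_1)\big).
\]
Here I use that $M_k = U_k(M_1)$ for $k \geq 1$, and for $k = 0$ that $U_0 = 1 = M_0$, so the identity $M_k = U_k(M_1)$ holds for all $k \geq 0$ with the convention $M_0 = 1$; thus the right-hand side is $\phi(M_{r_1}\cdots M_{r_p} M_k) = \phi(M_{r_1}\cdots M_{r_p}M_{r_{p+1}})$ with $r_{p+1} = k$.

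Next I would apply the moment formula of Theorem~\ref{th:free} directly to the vector $r \cup k = (r_1,\ldots,r_p,k)$: this gives $\phi(M_{r_1}\cdots M_{r_p}M_k) = \card NC_2(r \cup k)$, which is exactly the claimed formula~\eqref{cheb.02.02.09}. The only subtlety is the role of $k$: the moment formula in Theorem~\ref{th:free} was stated for vectors of \emph{positive} integers, and here $k$ may be $0$. When $k = 0$, the vector $r \cup 0$ contributes an empty block to $\hat 1_{r \cup 0} = \ker f_{r\cup 0}$, and the set $NC_2(r \cup 0)$ is naturally identified with $NC_2(r)$ (an empty block imposes no constraint and contributes no points); meanwhile $c_0^{(r)} = \int U_{r_1}\cdots U_{r_p}\, dw = \phi(M_{r_1}\cdots M_{r_p}) = \card NC_2(r)$, so the formula remains consistent. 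I would make this boundary case explicit in one sentence rather than belabor it.

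The main obstacle — really the only one — is making sure the reduction from the non-commutative trace to the scalar integral is clean: one must invoke that the ${}^*$-distribution of $M_1$ alone (which is semi-circular, hence has a genuine measure $w$) together with the polynomial identity $M_r = U_r(M_1)$ lets us replace the trace of a polynomial in the $M_r$'s by an integral against $w$. This is immediate from Theorem~\ref{lim.distri.12.01.08}, since all the $M_r$ lie in the commutative (indeed, classical) subalgebra generated by the semi-circular element $M_1$; I would state this as the first line of the proof. Everything else is a direct substitution, so the corollary follows in a few lines.
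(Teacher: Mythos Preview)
Your proposal is correct and follows exactly the approach the paper intends: the corollary is stated immediately after the observation that ``For the Chebyshev polynomials, these integrals are the expectation (the trace) of the corresponding products of the random variables $M_r$,'' so the paper's implicit proof is precisely your combination of Theorem~\ref{lim.distri.12.01.08} (to identify $M_r$ with $U_r(M_1)$ and $\phi$ with integration against $w$) and Theorem~\ref{th:free} (to evaluate the resulting moment). Your explicit treatment of the $k=0$ boundary case is a useful addition that the paper leaves unmentioned.
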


\begin{rk}It was mentioned to us by a referee that formula \eqref{cheb.02.02.09} had already been proved, with another method, in \cite[Th. 7]{viennot}.\end{rk}

In \cite{anshelevich}, a similar formula is deduced for the centered free Charlier polynomials $V_n$. These polynomials are orthogonal with respect to the centered Marchenko-Pastur density \[w_2(t) =\indic_{]-1,3]}(t) \frac{1}{2\pi} \sqrt{\frac{4}{1+t}-1} .\]
Note that $M_2=M_1^2-1$ has the distribution $d\mu_2 = w_2(t) dt$. Moreover, one can easily see that $V_n \circ U_2 = U_{2n}$ and thus
\[\int V_{r_1}(x)V_{r_2}(x) \cdots V_{r_p}(x) \cdot V_k(x) dw_2(x) = \int U_{2r_1}(x)U_{2r_2}(x) \cdots U_{2r_p}(x) \cdot U_{2k}(x) dw(x).\]
We obtain
\begin{cor}\label{linearisation.Char} The linearization coefficients for the centered free Charlier polynomials $V_n$are given by
\[d_k^{(r)} = \card NC_2(2r \cup 2k),\]
where $2r \cup 2k$ is the vector $(2r_1, \ldots, 2r_p, 2k)$.
\end{cor}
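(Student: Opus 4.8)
The plan is to derive Corollary \ref{linearisation.Char} directly from Corollary \ref{linearisation.Char}'s sibling, Corollary \ref{linearisation.Cheb}, by exploiting the composition relation between the free Charlier polynomials $V_n$ and the Chebyshev polynomials $U_{2n}$ already noted in the text. First I would record the key identity $V_n \circ U_2 = U_{2n}$, which one checks by comparing the three-term recurrences: $U_2(x) = x^2 - 1$ is, up to the relevant affine change of variables, the polynomial mapping the semicircle law (on which the $U_n$ are orthonormal) to the centered Marchenko--Pastur law $w_2$ (on which the $V_n$ are orthonormal), so pushing forward the orthonormal family $(U_n)$ under $U_2$ and keeping only the even-index members yields precisely the $(V_n)$. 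Equivalently, since $M_2 = M_1^2 - 1 = U_2(M_1)$ has distribution $w_2\,dt$, the polynomials $V_n(M_2) = V_n(U_2(M_1)) = U_{2n}(M_1)$ are orthonormal in $L^2(\phi)$ restricted to the algebra generated by $M_2$.

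Next I would use this to rewrite the linearization integral. Starting from
\[
d_k^{(r)} = \int V_{r_1}(x)\cdots V_{r_p}(x)\,V_k(x)\,dw_2(x),
\]
I substitute $x = U_2(y)$, which transports $w_2$ back to the semicircular weight $w$, and apply $V_{r_j}\circ U_2 = U_{2r_j}$ and $V_k \circ U_2 = U_{2k}$ to obtain
\[
d_k^{(r)} = \int U_{2r_1}(y)\cdots U_{2r_p}(y)\,U_{2k}(y)\,dw(y),
\]
which is exactly the displayed equation in the excerpt. The right-hand side is, by definition, the linearization coefficient $c_{2k}^{(2r)}$ for the Chebyshev polynomials of the second kind associated to the vector $(2r_1, \ldots, 2r_p)$ and index $2k$. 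Applying Corollary \ref{linearisation.Cheb} with the vector $2r = (2r_1,\ldots,2r_p)$ and the integer $2k$ gives $c_{2k}^{(2r)} = \card NC_2(2r \cup 2k)$, and combining the two equalities yields $d_k^{(r)} = \card NC_2(2r\cup 2k)$, as claimed.

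I expect the only genuine point requiring care — the "hard part", such as it is — to be the justification of $V_n \circ U_2 = U_{2n}$ and the legitimacy of the change of variables $x = U_2(y)$ inside the integral. The change of variables is not a bijection on the real line ($U_2$ is two-to-one on $[-2,2]$), so strictly speaking one should argue via the pushforward of measures: $(U_2)_* w\,dy = w_2\,dx$ (which is the statement that $M_2 = U_2(M_1)$ has law $w_2$, already asserted in the text), together with the fact that the integrand on the $y$-side, $\prod_j U_{2r_j}(y)\cdot U_{2k}(y)$, is an even polynomial in $y$ and hence descends to a well-defined polynomial in $x = U_2(y)$. Once that symmetry observation is in place the substitution is rigorous and the rest is bookkeeping; everything else is a direct appeal to Corollary \ref{linearisation.Cheb}.
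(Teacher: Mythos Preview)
Your proposal is correct and follows exactly the route the paper takes: the identity $V_n\circ U_2=U_{2n}$ together with the pushforward $(U_2)_*w=w_2$ reduces the Charlier linearization integral to the Chebyshev one, after which Corollary~\ref{linearisation.Cheb} applied to the vector $(2r_1,\ldots,2r_p,2k)$ finishes. Your discussion of the two-to-one nature of $U_2$ and the use of the measure pushforward rather than a literal change of variables is in fact more careful than the paper's own one-line justification.
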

Using the bijection between non-crossing pairings of size $2n$ and non-crossing partitions of size $n$ (see \cite{nica-speicher}, pp. 153--154), one can easily see that the sets $NC_2(2r \cup 2k)$ and $\{\pi \in NC(r \cup k | \pi \text{ has no singleton}\}$ have the same cardinality, hence our formula is equivalent to the one in \cite{anshelevich}.

\section{A classical probability analogue}
The model we have studied in the first part
involves permutations, asymptotical freeness, non-crossing pairings, the semi-circular distribution and its orthogonal polynomials, the second kind Chebyshev polynomials. By replacing permutations with sets, we construct in this section an analogue model, where the objects from free probability are replaced by their classical counterparts, respectively independence, (possibly crossing) pairings, and the Gaussian distribution with the orthogonal Hermite polynomials.

\subsection{Computation of the limit distribution}
The non-commutative probability space  we are going to work with here is  the group algebra $\C[\mc{G}]$ of the group $\mc{G}$ of finite sets of nonnegative integers endowed with he symmetric difference operation,  
with its canonical trace defined by $\psi(\sum_A x_ A A) = x_\emptyset$. Let us define, for all integer $r\geq 1$, and $t\in [0,+\infty)$, the element of $\C[\mathcal{G}]$ 
\[L_r(n,t) = \frac{1}{n^{r/2}} \sum\{a_1, a_2, \ldots, a_r\},\]where the sum runs over all $r$-uplets $(a_1,\ldots, a_r)$ of pairwise distinct integers of $[1, nt]$.  For $r=0$, we put $L_0(n,t) = \emptyset$ (which is the unity of this algebra).
Our purpose in what follows is to study the asymptotic properties (in the limit $n \to \iy$) of the family $(L_r(n,t))_{r,t}$.

Recall that for every $p \geq 1$ and for every vector $r = (r_1, \ldots, r_p)$ of positive integers, the function $f_r:\{1, \ldots |r|\} \to \{1, \ldots p\}$ is the projection defined by $f_r(x) = k$ iff $r_1+\cdots + r_{k-1} < x \leq r_1+\cdots +r_{k}$ ($|r| = r_1 + \cdots + r_p$). We replace the non-crossing partitions from the free case with general partitions: $\Pi_2(r)$ is the set of pairings $\pi$ of $\{1, \ldots, |r|\}$ which do not link two elements who have the same image by $f_r$, i.e. such that $\pi\wedge \hat 1_r = \hat 0_{|r|}$, where $\hat 1_r$ is still the partition of $\{1, \ldots, |r|\}$ with blocks $f_r^{-1}(1), f_r^{-1}(2), \ldots, f_r^{-1}(p)$.  We also introduce $\Pi_2^*(r) = \{\pi \in \Pi_2(r) | \pi \vee \hat 1_r = \hat 1_{|r|}\}$. 

In the following lemma we compute the asymptotic joint moments of the random variables $L_r(n,t)$.
\begin{lemma}\label{12.01.08.20h47}
Let $p \geq 1$ and consider $t_1,\ldots, t_p>0$ and a family of positive integers $r = (r_1, \ldots, r_p)$. Then, in the limit $n \to \iy$, the trace $\psi\left[\prod_{j=1}^{p}{L_{r_j}(n,t_j)}\right]$ converges to $$\ds\sum_{\pi\in \Pi_2(r)}\prod_{\{i,j\}\in \pi}\min(t_{f_r(i)},t_{f_r(j)}).$$
\end{lemma}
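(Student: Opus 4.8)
The strategy mirrors Step I--III of the proof of Theorem \ref{lim.distri.12.01.08}, but since the group $\mc{G}$ is abelian the computation is more direct: we expand the product into a sum over $|r|$-tuples, group the terms by the pattern of coincidences among the indices, and count which patterns survive in the limit. Concretely, since in $\mc{G}$ we have $\{a_1,\ldots,a_r\} = \{a_1\}\Delta\{a_2\}\Delta\cdots\Delta\{a_r\}$ (the $a_i$ within one block being pairwise distinct), we can write
\[
\prod_{j=1}^{p} L_{r_j}(n,t_j) \;=\; n^{-|r|/2}\sum_{a}\{a_1\}\Delta\{a_2\}\Delta\cdots\Delta\{a_{|r|}\},
\]
where $a=(a_1,\ldots,a_{|r|})$ ranges over tuples with $a_k$ an integer in $[1,nt_{f_r(k)}]$ and with $a_k\neq a_l$ whenever $f_r(k)=f_r(l)$. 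As in the free case, to each such $a$ we attach the partition $\mcP(a)$ of $\{1,\ldots,|r|\}$ defined by $k\sim l \iff a_k=a_l$; by construction $\mcP(a)\wedge \hat 1_r = \hat 0_{|r|}$. The key point, special to $\mc{G}$, is that $\{a_1\}\Delta\cdots\Delta\{a_{|r|}\}=\emptyset$ if and only if every value occurring among the $a_k$ occurs an \emph{even} number of times, i.e. every block of $\mcP(a)$ has even size.

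First I would regroup the sum by the partition $\pi=\mcP(a)$, so that $\psi[\prod_j L_{r_j}(n,t_j)] = n^{-|r|/2}\sum_{\pi} \card\{a : \mcP(a)=\pi,\ \text{all blocks of }\pi\text{ even}\}$, the sum over $\pi$ with $\pi\wedge\hat1_r=\hat0_{|r|}$. Next I would estimate, for a fixed such $\pi$ with $b$ blocks, the number of $a$ with $\mcP(a)=\pi$: one chooses one distinct integer per block, each ranging over the appropriate interval, so the count is asymptotically a product over blocks $B$ of (a quantity close to) $\min_{k\in B}\lfloor nt_{f_r(k)}\rfloor \sim n\cdot\min_{k\in B}t_{f_r(k)}$ — here the minimum enters precisely because the index $a_k$ for $k\in B$ must simultaneously lie in all the intervals $[1,nt_{f_r(k)}]$. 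Hence the contribution of $\pi$ is $\sim n^{b-|r|/2}\prod_{B\in\pi}\min_{k\in B}t_{f_r(k)}$. Since $\sum_{B}|B|=|r|$ and each $|B|\geq 2$ (blocks must be even, in particular nonempty of size $\neq 1$), we get $b\leq |r|/2$, with equality exactly when $\pi$ is a pairing. So in the limit only pairings $\pi\in\Pi_2(r)$ contribute, each with weight $\prod_{\{i,j\}\in\pi}\min(t_{f_r(i)},t_{f_r(j)})$, and partitions with a block of size $\geq 3$ (even or not) or any odd block vanish; this yields exactly the claimed formula.

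\textbf{Main obstacle.} The genuinely substantive step is the uniform control of the combinatorial estimates: one must check that the number of $a$ with $\mcP(a)=\pi$ is $\prod_{B\in\pi}\min_{k\in B}\lfloor nt_{f_r(k)}\rfloor + O(n^{b-1})$ (the error coming from inclusion--exclusion to enforce that distinct blocks receive distinct values), so that after dividing by $n^{|r|/2}$ the surviving terms converge to the stated limit and all others go to $0$. This is elementary but requires care with the floor functions and with the fact that the number of partitions $\pi$ is finite and independent of $n$, so the $O(\cdot)$ bounds can be taken uniform. A secondary point worth spelling out is the ``parity'' observation that $\{a_1\}\Delta\cdots\Delta\{a_{|r|}\}$ is the set of values appearing an odd number of times — trivial, but it is what forces even blocks and thus, combined with the dimension count $b\leq|r|/2$, pins down pairings; note that for pairings the even-block condition is automatic, which is why no extra parity restriction appears in $\Pi_2(r)$.
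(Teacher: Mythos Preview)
Your proposal is correct and follows essentially the same approach as the paper: expand the product into a sum over $|r|$-tuples, regroup by the coincidence partition $\mcP(a)$, use the parity observation that $\{a_1\}\Delta\cdots\Delta\{a_{|r|}\}=\emptyset$ forces even blocks (hence no singletons), and then use the block-count estimate $b\leq |r|/2$ to see that only pairings survive with the stated $\min$-weights. Your write-up is in fact slightly more careful than the paper's about the inclusion--exclusion error and the role of the minimum, but the argument is the same.
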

\begin{proof}
Using the properties of the symmetric difference $\Delta$, we get
\[\prod_{j=1}^{p}{L_{r_j}(n,t_j)} = n^{-\frac{|r|}{2}}\sum_{a}\{a_1\}\Delta \{a_2\}\Delta \cdots \Delta \{a_{|r|}\},\]
where the sum is taken over all families $a = (a_1, \ldots a_{|r|})$ of positive integers  such that for all $k,l \in \{1, \ldots |r|\}$, $a_k \in [1,nt_{f_r(k)}]$ and $a_k \neq a_l$ whenever $f_r(k) = f_r(l)$. To such a family $a$ we associate the partition $\mcP(a)$ of the set $\{1, \ldots |r|\}$ defined by $k \sim l $ if and only if $a_k = a_l$. Thus, for all $a$, $\mcP(a)$ does not link two elements that have the same image by $f_r$. We regroup the terms of the preceding sum according to the partitions $\mcP$:
\[\sum_\pi n^{-\frac{|r|}{2}}\sum_{a : \mcP(a) = \pi}\{a_1\}\Delta \cdots \Delta \{a_{|r|}\}.\]
Let us show that only pairings can contribute to the asymptotic trace of the sum. It is obvious that $\{a_1\}\Delta \cdots \Delta \{a_{|r|}\}$ is the empty set if and only if each $a_i$ appears an even number of times. Thus, $\pi = \mathcal P(a)$ cannot contain singletons. On the other hand, if $\pi$ contains no singleton but has a class with at least three elements, it is easy to show that there are no more than $(n\max\{t_1,\ldots, t_p\})^{\frac{|r|-1}{2}}$ families $a$ such that $\mcP(a) = \pi$ and thus such partitions $\pi$ do not contribute asymptotically. 

For $\pi$ pairing of $\Pi_2(r)$, the number of families $a$ such that $\mathcal P(a) = \pi$, is 
asymptotic, 
 as $n$ goes to infinity, to $n^{\frac{|r|}{2}}\prod_{\{i,j\}\in \pi} \min(t_{f_r(i)},t_{f_r(j)}) ,$
which concludes the proof.
\end{proof}

Before stating the main result of this section, let us recall  some facts about the Hermite polynomials, denoted by $(H_n)$.
These are the orthogonal polynomials on the real line with respect to the standard Gaussian measure. They satisfy  the recurrence relation
\[H_0(x) = 1, \quad
H_1(x) = x, \quad H_1(x)H_r(x) = H_{r+1}(x) + rH_{r-1}(x), \forall r\geq 1.\]

\begin{theo}\label{lim.distri.12.01.08.classique}The distribution of the family $(L_r(n,t))_{r\geq 1, t\in [0,+\infty)}$ converges, as $n$ goes to infinity, to the one of a commutative family  $(L_r(t))_{ r\geq 1, t\in [0,+\infty)}$ such that   $(L_1(t))_{  t\in [0,+\infty)}$  is a classical Brownian motion and for all $r,t$, one has $L_r(t)=t^{\f{r}{2}}H_r(t^{-1/2}L_1(t))$, where the $H_r$'s are the Hermite polynomials.
\end{theo}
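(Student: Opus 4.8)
The plan is to mirror, step by step, the structure of the proof of Theorem~\ref{lim.distri.12.01.08}, replacing the Chebyshev recursion by the Hermite one. First I would invoke Lemma~\ref{12.01.08.20h47} (in the special case $r=\langle 1\rangle_p$, $t_j$ arbitrary) to identify the limit of the joint moments of the family $(L_1(n,t))_{t\ge 0}$: since $\Pi_2(\langle 1\rangle_p)$ is just the set of \emph{all} pairings of $\{1,\dots,p\}$ and the weight of a pairing is $\prod_{\{i,j\}}\min(t_i,t_j)$, this is exactly the Wick/moment formula for a centered Gaussian process with covariance $\min(s,t)$, i.e. a classical Brownian motion. (Equivalently one checks that the limit functional is a state on a commutative algebra, hence comes from a probability measure, and the moments match those of Brownian motion.) So $(L_1(n,t))$ converges in moments to a Brownian motion $(L_1(t))$.

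Next, in analogy with Step~II, I would prove that for every fixed $r\ge1$ and $t>0$,
\[
\lim_{n\to\infty}\psi\bigl[(L_1(n,t)L_r(n,t) - r\,t\,L_{r-1}(n,t) - L_{r+1}(n,t))^2\bigr]=0 .
\]
The computation of $L_1(n,t)L_r(n,t)$ is even simpler than in the free case because $\Delta$ is commutative: expanding $\{a_{r+1}\}\Delta\{a_1,\dots,a_r\}$ over all choices of $a_{r+1}$ distinct from the others splits into the term where $a_{r+1}\notin\{a_1,\dots,a_r\}$ (giving $L_{r+1}(n,t)$ up to the $\lfloor nt\rfloor/n\to t$ correction from the forbidden coincidences) and the $r$ terms where $a_{r+1}=a_k$ for some $k$, each of which removes that element and yields a copy of $L_{r-1}(n,t)$, whence the $r\,t\,L_{r-1}(n,t)$ term (again with $\lfloor nt\rfloor/n\to t$). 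One then checks the remaining error has vanishing second moment by the same pairing-counting argument as in Step~II and Lemma~\ref{12.01.08.20h47}: in the square of the error, for the set-sum to be empty every index must be matched, which forces the two $r$-tuples to essentially coincide, leaving only $O(n^{|r|-1})\cdot n^{-(r+1)}\cdot n^{?}$ — more precisely $O(n^{r})$ surviving terms against a normalization $n^{-(r+1)}$, so the contribution is $O(1/n)$.

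Then, exactly as in Steps~III and~IV, I would argue by induction on $R=\max_j r_j$ that every mixed moment $\psi(P((L_r(n,t))))$ converges: the base cases $R\le1$ follow from the Brownian-motion step and the convention $L_0(n,t)=\emptyset$; for the inductive step, rewrite each occurrence of $X_R(t)$ as $\bigl(X_1(t)X_{R-1}(t)-R\,t\,X_{R-2}(t)\bigr)-\bigl(X_1(t)X_{R-1}(t)-R\,t\,X_{R-2}(t)-X_R(t)\bigr)$ and kill the second parenthesis in the limit using the previous step together with Cauchy--Schwarz (applied to the sesquilinear form $(x,y)\mapsto\psi(xy^*)$, which is nonnegative since $\psi$ is a trace-state). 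This shows the limit functional $\Psi$ exists, and that the ideal generated by $\{X_0(t)-1\}\cup\{X_1(t)X_{m-1}(t)-m\,t\,X_{m-2}(t)-X_m(t):m\ge2\}$ lies in its kernel. Passing to the quotient, each $X_m(t)$ becomes a polynomial in $X_1(t)$; comparing with the Hermite recurrence $H_1H_{m-1}=H_m+(m-1)H_{m-1}$... — one checks directly that the rescaled polynomials $Q_m(t):=t^{m/2}H_m(t^{-1/2}X_1(t))$ satisfy $Q_0=1$, $Q_1=X_1(t)$, and $X_1(t)Q_{m-1}=Q_m+(m-1)t\,Q_{m-2}$, hence $X_m(t)=t^{m/2}H_m(t^{-1/2}X_1(t))$. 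Since $(L_1(t))$ is Brownian motion and the whole family lives in a commutative algebra, this identifies the limit distribution and completes the proof.

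The main obstacle I anticipate is bookkeeping rather than conceptual: getting the combinatorial factor $r$ (respectively $m$, or $m-1$) correct in the middle term of the recursion and making sure it matches the exact form of the Hermite three-term relation — the free case had no such multiplicity because a cycle $(0a_1\cdots a_r)$ can only ``close up'' in one place, whereas a set $\{a_1,\dots,a_r\}$ can absorb the new point $a_{r+1}=a_k$ at any of the $r$ positions. A secondary point to be careful about is that Lemma~\ref{12.01.08.20h47} is stated for \emph{strictly} positive $t_j$, so the $t=0$ case (where $L_r(n,0)=0$ for $r\ge1$ and the limit is the degenerate process) should be recorded separately or absorbed into the convention; and one should note explicitly that the error estimates are uniform enough in the finitely many $(r_j,t_j)$ appearing in a given monomial.
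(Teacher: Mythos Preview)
Your proposal is exactly the paper's proof: a preliminary Step~0 (commutativity of $\Delta$), Step~I via Lemma~\ref{12.01.08.20h47} with $r=\langle 1\rangle_p$ and the Wick formula, the $L^2$ recursion of Step~II, and then Steps~III--IV verbatim from Theorem~\ref{lim.distri.12.01.08} with Chebyshev replaced by Hermite. Two small corrections worth recording, both of the bookkeeping kind you yourself flagged: (i) the Step~II computation is in fact an \emph{exact} identity,
\[
L_r(n,t)L_1(n,t)=L_{r+1}(n,t)+\frac{r(\lfloor nt\rfloor-r+1)}{n}\,L_{r-1}(n,t),
\]
so the remainder is simply $\epsilon_n L_{r-1}(n,t)$ with a deterministic $\epsilon_n\to 0$ and no separate pairing-counting on an error term is needed; (ii) in your Step~III the middle coefficient should be $(R-1)t$, not $Rt$ (apply Step~II with $r=R-1$), so the ideal in Step~IV is generated by $X_1(t)X_{m-1}(t)-(m-1)tX_{m-2}(t)-X_m(t)$ --- which then matches your correct verification that $Q_m:=t^{m/2}H_m(t^{-1/2}X_1(t))$ satisfies $X_1 Q_{m-1}=Q_m+(m-1)t\,Q_{m-2}$.
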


\begin{proof}{\it Step 0. } Note first that the symmetric difference is a commutative operation on sets. Hence the algebra $\C[\mc{G}]$ is commutative.

{\it Step I. }  It follows from a direct application of the previous lemma that the  distribution of the family $(L_1(n,t))_{  t\in [0,+\infty)}$ converges, as $n$ goes to infinity, to the one of a classical Brownian motion $(L_1(t))_{  t\in [0,+\infty)}$. 

{\it Step II. } Let us prove that for all integer $r \geq 1$, and $t\in (0,+\infty)$, 
\begin{equation}\label{12.01.08.1}\lim_{n \to \iy} \psi[(L_r(n,t)L_1(n,t) - rtL_{r-1}(n,t) - L_{r+1}(n,t))^2] = 0.\end{equation}
This is a consequence of
the following computation of  $L_r(n,t)L_1(n,t)$. The sums run over integers of  $[1,nt]$.
\begin{eqnarray*}
L_r(n,t)L_1(n,t)&=& n^{- \frac{r+1}{2}} \sum_{\substack{(a_1, \ldots, a_r)\\ (a_{r+1})}} \{a_1\} \Delta \cdots \Delta \{a_{r+1}\} \\
&=& n^{- \frac{r+1}{2}} \sum_{(a_1, \ldots, a_{r+1})}  \{a_1,a_2, \ldots a_r,a_{r+1}\} \\ &&+ n^{- \frac{r+1}{2}} \sum_{k=1}^{r}\sum_{(a_1, \ldots, a_r)} \{a_1, a_2, \ldots, \check{a}_k, \ldots, a_r\}\\
&=& L_{r+1}(n,t) + n^{- \frac{r+1}{2}} \sum_{k=1}^{r}(\lfloor nt\rfloor -r+1)\sum_{(b_1, \ldots, b_{r-1})} \{b_1, b_2, \ldots, b_{r-1}\}\\
&=& L_{r+1}(n,t) + \frac{\lfloor nt\rfloor-r+1}{n} rL_{r-1}(n,t)\\ & =& L_{r+1}(n,t) + rtL_{r-1}(n,t) +\epsilon_n L_{r-1}(n,t) \textrm{, with $\epsilon_n\ninf 0$}.
\end{eqnarray*}

{\it Step III } and {\it Step IV }are as in the proof of Theorem \ref{lim.distri.12.01.08}, with the difference that here, the algebra is commutative, hence one-dimensional non-commutative distributions are integrations with respect to a probability measure, which is unique in this case.    \end{proof}

The following corollary is the classical probability theory counterpart of corollary   \ref{ok-decay.13.01.08}.  Roughly speaking, it states that duly renormalized  elements of $\C[\mc{G}]$ of the type $$A(n):=\!\!\!\!\!\!\!\!\!\!\!\!\!\sum_{\substack{a_1,\ldots, a_{r_a}\\ \textrm{in a set of size $n$}}} \!\!\!\!\!\!\!\!\!\!\!\!\!\{a_1,\ldots, a_{r_a}\},\quad B(n):=\!\!\!\!\!\!\!\!\!\!\!\!\!\sum_{\substack{b_1,\ldots, b_{r_b}\\ \textrm{in a set of size $n$}}}\!\!\!\!\!\!\!\!\!\!\!\!\!\{b_1,\ldots, b_{r_b}\},\quad C(n):=\!\!\!\!\!\!\!\!\!\!\!\!\!\sum_{\substack{c_1,\ldots, c_{r_c}\\ \textrm{in a set of size $n$}}} \!\!\!\!\!\!\!\!\!\!\!\!\!\{c_1,\ldots c_{r_c}\},\ldots $$ are asymptotically independent as $n$ goes to infinity if the respective sets where the $a_i$'s, the $b_i$'s and the $c_i$'s are picked are pairwise disjoint. 

 \begin{cor}Fix $p\geq 1$, $r_1,\ldots, r_p\geq 0$, $t_0<t_1<\cdots <t_p$, and defines, for all $i=1,\ldots, p$, for each $n\geq 1$, $L_i(n)=\ds n^{-\f{r_i}{2}}\sum \{a_1,\ldots, a_{r_i}\}$, where the sum runs over all $r_i$-uplets $(a_1,\ldots, a_{r_i})$ of pairwise distinct integers of $(nt_{i-1},nt_i]$.  Then $L_1(n),\ldots, L_p(n)$ are asymptotically independent as $n$ goes to infinity.
\end{cor}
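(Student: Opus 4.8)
The plan is to mirror the argument of Corollary~\ref{ok-decay.13.01.08}, replacing the free/semicircular ingredients by their classical/Gaussian counterparts provided by Theorem~\ref{lim.distri.12.01.08.classique}. First I would set, for each $i=1,\ldots,p$ and $n\geq 1$,
\[
S_i(n):=n^{-\f12}\sum_{\substack{a\in (nt_{i-1},nt_i]\\ a\textrm{ integer}}}\{a\}\in\C[\mc G].
\]
By Lemma~\ref{12.01.08.20h47} (or directly by Theorem~\ref{lim.distri.12.01.08.classique} applied on disjoint time-windows), the joint distribution of $(S_1(n),\ldots,S_p(n))$ converges, as $n\to\infty$, to that of a family $(g_1,\ldots,g_p)$ of independent centered Gaussian variables, $g_i$ having variance $t_i-t_{i-1}$: indeed the asymptotic covariance $\psi[S_i(n)S_j(n)]\to \min(t_i,t_j)-\min(t_{i-1},t_{j-1})$, which is $t_i-t_{i-1}$ when $i=j$ and $0$ when $i\neq j$ since the windows are disjoint, and all the variables live in the commutative algebra $\C[\mc G]$, so their limit law is an honest product probability measure on $\R^p$.

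Next I would invoke Step~II of Theorem~\ref{lim.distri.12.01.08.classique}, localized to the window $(nt_{i-1},nt_i]$: the same $L^2$-estimate shows that, for each $i$,
\[
\lim_{n\to\infty}\psi\bigl[(L_i(n)-(t_i-t_{i-1})^{\f{r_i}{2}}H_{r_i}((t_i-t_{i-1})^{-1/2}S_i(n)))^2\bigr]=0,
\]
because $L_i(n)$ differs from the corresponding Hermite polynomial in $S_i(n)$ only by error terms whose $\psi$-variance tends to $0$ (here one uses that all counting is done inside a set of size $(t_i-t_{i-1})n+o(n)$, so the recursion $H_1H_r=H_{r+1}+rH_{r-1}$ is asymptotically reproduced). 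Combining this with the Cauchy--Schwarz inequality, the joint distribution of
\[
(S_1(n),L_1(n),\ldots,S_p(n),L_p(n))
\]
converges to that of
\[
\bigl(g_1,(t_1-t_0)^{\f{r_1}{2}}H_{r_1}((t_1-t_0)^{-1/2}g_1),\ldots,g_p,(t_p-t_{p-1})^{\f{r_p}{2}}H_{r_p}((t_p-t_{p-1})^{-1/2}g_p)\bigr).
\]
In particular $L_i(n)$ converges jointly to a deterministic function $\Phi_i(g_i)$ of the single Gaussian $g_i$.

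Finally I would conclude: since $g_1,\ldots,g_p$ are independent, any measurable functions $\Phi_1(g_1),\ldots,\Phi_p(g_p)$ of them are independent as well, so $L_1(n),\ldots,L_p(n)$ are asymptotically independent. The one point that requires a little care — and which I regard as the main (mild) obstacle — is the localized version of Step~II: one must check that cutting the index set down to the window $(nt_{i-1},nt_i]$ does not disturb the vanishing-of-the-remainder estimate, i.e. that the counting bounds on the ``bad'' terms (those producing a class of size $\geq 3$, or an overlap between two copies) still give a factor $o(1)$ after normalization. This is exactly the same combinatorial bookkeeping as in Lemma~\ref{12.01.08.20h47} and in Step~II of Theorem~\ref{lim.distri.12.01.08.classique}, with $nt$ everywhere replaced by $n(t_i-t_{i-1})$, so it goes through \emph{mutatis mutandis}; everything else is a routine application of the convergence-of-moments criterion in the commutative setting.
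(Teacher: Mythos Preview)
Your proposal is correct and is precisely the argument the paper has in mind: the paper's own proof is the single line ``\emph{Mutatis mutandis}, the proof goes along the same lines as the one of corollary~\ref{ok-decay.13.01.08},'' and you have carried out exactly that translation, introducing the window-restricted $S_i(n)$, using the classical Brownian-increment limit from Theorem~\ref{lim.distri.12.01.08.classique}, invoking the localized Step~II $L^2$-estimate, and concluding via Cauchy--Schwarz that each $L_i(n)$ is asymptotically a Hermite polynomial of the corresponding independent Gaussian $g_i$.
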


\begin{proof}{\it Mutatis mutandis,} the proof goes along the same lines as the one of corollary \ref{ok-decay.13.01.08}.
\end{proof}

\subsection{Moments and cumulants of the limit distribution}

In the following theorem,  we compute the  mixed moments and cumulants of the family $(L_r)_{r\geq 1} = (L_r(1))_{r\geq 1}$ (the mixed moments and cumulants  of the family $(L_r(t))_{r\geq 1, t\in [0,+\infty)}$ can easily be computed in the same way). Here, the analogy with the free probability model is obvious, since the formulas are the same ones as in Theorem \ref{th:free}, with the difference that the pairings are now allowed to have crossings.
 \begin{theo}\label{th:classique.12.01.08}
The distribution of the family  $(L_r)_{r \geq 1}$ is characterized by the fact that its mixed moments are given by
	\[\psi(L_{r_1}L_{r_2} \cdots L_{r_p}) = \card \Pi_2 (r)\]
and its   classical cumulants are given by
	\[k_p(L_{r_1}, L_{r_2}, \ldots ,L_{r_p}) = \card \Pi_2^* (r).\]	
\end{theo}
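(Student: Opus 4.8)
The plan is to mirror, step by step, the proof of Theorem \ref{th:free}, using Lemma \ref{12.01.08.20h47} in place of the moment computation there and using the formula for classical cumulants in terms of all partitions (rather than free cumulants in terms of non-crossing partitions). First I would establish the moment formula $\psi(L_{r_1}\cdots L_{r_p}) = \card \Pi_2(r)$: this is exactly the content of Lemma \ref{12.01.08.20h47} specialized to $t_1 = \cdots = t_p = 1$, since then every factor $\min(t_{f_r(i)},t_{f_r(j)})$ equals $1$ and the sum over $\pi \in \Pi_2(r)$ collapses to the cardinality. One should remark that the existence of a genuine limiting \emph{distribution} (a probability measure on $\R^{\{r\geq 1\}}$, or rather the joint law of the family) is guaranteed by Theorem \ref{lim.distri.12.01.08.classique}, whose Step III gives convergence of all mixed moments and whose commutativity assertion identifies the limit with integration against a probability measure; the moment formula then just computes those limits.

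Next I would pass to cumulants. Recall that the classical (joint) cumulants $(k_\pi)_{\pi \in \mathcal P(p)}$ are the multiplicative family of functionals determined by the moment-cumulant relation $\psi(L_{r_1}\cdots L_{r_p}) = \sum_{\pi \in \mathcal P(p)} k_\pi(L_{r_1},\ldots,L_{r_p})$, where $k_\pi = \prod_{B \in \pi} k_{|B|}(\ldots)$ and $\mathcal P(p)$ is the lattice of \emph{all} partitions of $\{1,\ldots,p\}$. The strategy is to exhibit a partition of the set $\Pi_2(r)$ indexed by $\mathcal P(p)$ whose block associated to $\pi$ has cardinality equal to a multiplicative quantity, so that by uniqueness of the cumulant family we can read off $k_p$. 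Concretely, to each pairing $\mathcal P \in \Pi_2(r)$ I associate the partition $\bar{\mathcal P} \in \mathcal P(p)$ on the ``blocks'' $f_r^{-1}(1),\ldots,f_r^{-1}(p)$ defined by $k \stackrel{\bar{\mathcal P}}{\sim} l$ iff the blocks $f_r^{-1}(k)$ and $f_r^{-1}(l)$ lie in the same block of $\mathcal P \vee \hat 1_r$ — exactly as in the free case but without any non-crossing constraint. This gives $\card \Pi_2(r) = \sum_{\pi \in \mathcal P(p)} \card\{\mathcal P \in \Pi_2(r) \mid \bar{\mathcal P} = \pi\}$, and the functional $\pi \mapsto \card\{\mathcal P \in \Pi_2(r) \mid \bar{\mathcal P} = \pi\}$ is multiplicative over the blocks of $\pi$ because a pairing counted by the $\pi$-term decomposes as a disjoint union of pairings, one on the positions over each block of $\pi$, each of which is ``connected'' in the sense that its induced partition is the one-block partition. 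Hence $k_\pi(L_{r_1},\ldots,L_{r_p}) = \card\{\mathcal P \in \Pi_2(r) \mid \bar{\mathcal P} = \pi\}$ for every $\pi$, and taking $\pi = \hat 1_p$ yields $k_p(L_{r_1},\ldots,L_{r_p}) = \card\{\mathcal P \in \Pi_2(r) \mid \mathcal P \vee \hat 1_r = \hat 1_{|r|}\} = \card \Pi_2^*(r)$.

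The main obstacle, as in the free case, is to justify cleanly that $\pi \mapsto \card\{\mathcal P \in \Pi_2(r) \mid \bar{\mathcal P} = \pi\}$ is multiplicative; everything else is bookkeeping. The point is that for a pairing $\mathcal P \in \Pi_2(r)$ with $\bar{\mathcal P} = \pi$, no pair of $\mathcal P$ can join a position over a block $f_r^{-1}(k)$ to a position over $f_r^{-1}(l)$ when $k,l$ are in different blocks of $\pi$ — otherwise $\bar{\mathcal P}$ would merge those blocks — so $\mathcal P$ splits as a product of independent pairings, one for each block $B$ of $\pi$ on the index set $\bigsqcup_{k \in B} f_r^{-1}(k)$, and each such piece must itself satisfy the join condition $\hat 1_{\langle\text{one block}\rangle}$, i.e. be an element of the $\Pi_2^*$ of the corresponding sub-vector of $r$. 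This is literally the statement that $\card\{\mathcal P \mid \bar{\mathcal P} = \pi\} = \prod_{B \in \pi} \card \Pi_2^*(r_B)$ where $r_B = (r_k)_{k \in B}$, which is the required multiplicativity. I would note in passing that unlike the free case no analogue of ``only non-crossing pairings survive'' needs to be re-proved here: Lemma \ref{12.01.08.20h47} already did the asymptotic analysis and produced \emph{all} pairings. Finally, one could add the parenthetical remark that the $t$-dependent versions follow identically, the only change being that $\card \Pi_2(r)$ and $\card \Pi_2^*(r)$ get replaced by the weighted sums $\sum_{\mathcal P} \prod_{\{i,j\}\in\mathcal P}\min(t_{f_r(i)},t_{f_r(j)})$ already appearing in Lemma \ref{12.01.08.20h47}.
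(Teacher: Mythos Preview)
Your proposal is correct and follows exactly the approach the paper indicates: invoke Lemma \ref{12.01.08.20h47} at $t_1=\cdots=t_p=1$ for the moments, then repeat the cumulant argument of Theorem \ref{th:free} with the lattice of all partitions in place of non-crossing ones. You have simply spelled out in detail what the paper compresses into a two-line proof sketch, including the multiplicativity of $\pi\mapsto \card\{\mathcal P\in\Pi_2(r)\mid \bar{\mathcal P}=\pi\}$, which is indeed the only point requiring any thought.
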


\begin{proof} The moments have been computed in Lemma \ref{12.01.08.20h47} and the cumulants can be computed in the same way as  in the proof of Theorem \ref{th:free}.
\end{proof}

\begin{rk}
The correspondence between the limit distributions of the classical and the free case is not the Bercovici-Pata bijection, since the distribution of $L_2$ is not a classical Poisson distribution.
\end{rk}

\subsection{An application: linearization coefficients for orthogonal polynomials} As in Corollaries \ref{linearisation.Cheb} and \ref{linearisation.Char}, one  deduce from this work  combinatorial formulas for  the  linearization coefficients for Hermite and centered Charlier polynomials. Up to normalization, the formulas are the same ones, with the difference that non crossing parings are replaced by pairings.

\section{Further combinatorics}In this section, we explore  connections between several combinatorial structures and the sets $NC_2(r)$ and $NC_2^*(r)$, which appeared in the formulas of moments and free cumulants of the family $M_r(1)$. 

\subsection{A bijection with a class of paths} Here, we shall denote the set of nonnegative integers by $\N$ and the set of integers by $\Z$. 

It is well known that for all $n\geq 1$, the  $n$-th moment of a semi-circular element is the number of Dyck paths with length $n$, i.e. of functions $\gamma : \{0,\ldots, n\}\to \N$  such that  $\gamma(0)=\gamma(n)=0$ and for all $i$, $|\gamma(i)-\gamma(i-1)|=1$. 
Since for $n,t$ fixed, the  $M_r(n,t)$'s ($r\geq 1$) are a generalizations of the Jucys-Murphy element $M_1(n,t)$, whose distribution tends to a semi-circular one, it is natural to expect a generalization of this interpretation of the moments in terms of paths for the moments of the $M_r(t)$'s. 
We show here that the mixed moments and free cumulants of the family $(M_r)_{r \geq 1}$ count lattice paths with general jump size, as follows. Consider an integer vector $r= (r_1, \ldots, r_p)$. For $k \geq 1$, define $\Delta(k) = \{k, k-2, k-4, \ldots, -k+2, -k\} = \{t-s; s,t \in \N, s+t = k\} \subset \Z$. We define a {\it Dyck $r$-path}  to be a function $\gamma:\{0, 1, \ldots, p\} \to \Z$ such that $\gamma(0) = 0$, $\gamma(p) = 0$, $\gamma(i) + \gamma(i-1) \geq r_i$ and $\gamma(i) - \gamma(i-1) \in \Delta(r_i)$ for all $i \in \{1, \ldots, p\}$ ($\Delta(k)$ is somehow the set of  admissible jumps for these paths). We denote by $\Gamma(r)$ the set of Dyck $r$-paths and we also consider its subset $\Gamma^*(r)$ of \emph{irreducible} Dyck $r$-paths: a Dyck $r$-path $\gamma$ is said to be irreducible if it has the property that it does not contain strictly smaller Dyck $s$-paths, in the following sense: there is no pair $(x,y) \neq (0,p)$ such that the path $\bar \gamma:\{0, \ldots y-x\} \to \Z$ defined by $\bar \gamma(i) = \gamma(x+i) - \gamma(x)$ is a Dyck $(r_{x+1}, r_{x+2}, \ldots, r_y)$-path.

It can be easily seen that Dyck $r$-paths are always positive ($\gamma(i) \geq 0$, for all $i \in \{0, \ldots, p\}$) and that the first and the last jumps are the largest, respectively smallest, possible: $\gamma(1) = r_1$ and $\gamma(p-1) = r_p$. By the following proposition, Dyck $r$-paths (resp. irreducible ones) are counted by the moments (resp. free cumulants) of the family $(M_r)_r:=(M_r(1))_r$:
\begin{pr}\label{pr:paths}
The sets $NC_2(r)$ and $\Gamma(r)$ are in bijection. The same holds true for $NC^*_2(r)$ and $\Gamma^*(r)$. In particular, we have
\[\phi(M_{r_1}M_{r_2}\cdots M_{r_p}) = \card \Gamma(r)\]
and
\[\k_p(M_{r_1},M_{r_2},\ldots, M_{r_p}) = \card \Gamma^*(r).\] 
\end{pr}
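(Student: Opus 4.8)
The plan is to construct an explicit bijection $NC_2(r) \to \Gamma(r)$ and then check that it restricts to a bijection $NC_2^*(r) \to \Gamma^*(r)$; the formulas for moments and free cumulants then follow immediately from Theorem \ref{th:free}. The natural candidate for the map is the "height function" associated to a non-crossing pairing. Given $\pi \in NC_2(r)$, for each $k \in \{0,1,\ldots,p\}$ let $x_k = r_1 + \cdots + r_k$ be the right endpoint of the $k$-th block of $\hat 1_r$ (with $x_0 = 0$), and set $\gamma(k)$ to be the number of pairs of $\pi$ that "straddle" the gap between position $x_k$ and position $x_k + 1$, i.e. $\gamma(k) = \card\{\{i,j\}\in\pi : i \leq x_k < j\}$. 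One has $\gamma(0) = \gamma(p) = 0$ since nothing straddles the ends.

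The key computation is to understand how $\gamma$ changes from $k-1$ to $k$. Passing the block $B_k = \{x_{k-1}+1, \ldots, x_k\}$ of size $r_k$: each of the $r_k$ points in $B_k$ is paired to some point either to its left or to its right, and because $\pi$ is non-crossing and the constraint $\pi \wedge \hat 1_r = \hat 0_{|r|}$ forbids two points of $B_k$ being paired to each other, the points paired to the left of $B_k$ must be a prefix of $B_k$ and those paired to the right a suffix — more precisely, non-crossingness forces that if $s$ of the points of $B_k$ open pairs going right and $r_k - s$ close pairs coming from the left, then $\gamma(k) - \gamma(k-1) = s - (r_k - s) = 2s - r_k \in \Delta(r_k)$, and $\gamma(k) + \gamma(k-1) = (\text{pairs straddling before that survive or close}) + (\text{those that survive or open}) \geq r_k$ because every point of $B_k$ is accounted for by a straddling pair on one side or the other. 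This is exactly the Dyck $r$-path condition. I would verify carefully that non-crossingness is both necessary and sufficient here: given an arbitrary path $\gamma \in \Gamma(r)$, one reconstructs $\pi$ greedily by, at step $k$, closing the $r_k - s$ "oldest" currently-open pairs with the first $r_k - s$ points of $B_k$ and opening $s$ new pairs at the last $s$ points of $B_k$ (where $2s - r_k = \gamma(k)-\gamma(k-1)$); the last-in-first-out discipline is precisely what guarantees the result is non-crossing, and the condition $\gamma(k)+\gamma(k-1)\geq r_k$ is what guarantees there are enough open pairs to close. This gives a two-sided inverse.

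For the irreducible version, I would observe that a "sub-Dyck-path" in the sense of the definition of $\Gamma^*(r)$ — a pair $(x,y)\neq(0,p)$ with $\bar\gamma$ a Dyck $(r_{x+1},\ldots,r_y)$-path — corresponds under the bijection exactly to the event that no pair of $\pi$ straddles position $x_x$ to position $x_x+1$ or position $x_y$ to position $x_y+1$, equivalently that $\pi$ restricted to $\{x_x+1,\ldots,x_y\}$ is already a pairing of that interval, i.e. that the blocks $B_{x+1},\ldots,B_y$ of $\hat 1_r$ are saturated among themselves and hence form a union of blocks of $\pi \vee \hat 1_r$. So the path $\gamma$ is irreducible iff $\pi \vee \hat 1_r$ has no proper "interval" sub-block, which for a non-crossing partition is equivalent to $\pi\vee\hat 1_r = \hat 1_{|r|}$, i.e. $\pi \in NC_2^*(r)$. (One should note that, since $\pi$ is non-crossing, $\pi \vee \hat 1_r$ is non-crossing too, so its blocks are intervals when grouped along $\hat 1_r$, which is why "some sub-block" and "some interval sub-block" coincide.) Combining with Theorem \ref{th:free}, $\phi(M_{r_1}\cdots M_{r_p}) = \card NC_2(r) = \card\Gamma(r)$ and $\k_p(M_{r_1},\ldots,M_{r_p}) = \card NC_2^*(r) = \card\Gamma^*(r)$.

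The main obstacle I anticipate is the careful bookkeeping in the "openers before / closers after" analysis that shows the height increments land in $\Delta(r_k)$ and satisfy $\gamma(k)+\gamma(k-1)\geq r_k$, together with the verification that the greedy LIFO reconstruction really is well-defined and inverse to the height map — in particular that non-crossingness is used in exactly the right place and that the block-internal non-linking constraint $\pi\wedge\hat 1_r = \hat 0_{|r|}$ is automatically respected (it is, since all points of $B_k$ get paired outside $B_k$ by construction). The irreducibility equivalence is then essentially a restatement once the basic bijection is set up, but I would take care to phrase the correspondence between "$\gamma$ returns to a value compatible with splitting" and "$\pi\vee\hat 1_r$ splits" precisely, since the definition of irreducible Dyck $r$-path is stated in terms of arbitrary windows $(x,y)$ rather than just returns to zero.
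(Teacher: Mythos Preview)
Your approach is essentially the paper's: the bijection is exactly the opener/closer height function, and the paper likewise reduces the irreducible case to the connectedness of $\pi \vee \hat 1_r$ (giving only a sketch there, so your interval-block argument via the non-crossingness of $\bar\pi \in NC(p)$ is in fact more detailed). Two small slips to fix: in the greedy reconstruction you must close the \emph{newest} (most recently opened) pairs, not the ``oldest'' --- that is what LIFO means, and it is what forces non-crossingness; and in the irreducibility paragraph, the correct characterization is that no pair has \emph{exactly one} endpoint in $\{x_x+1,\ldots,x_y\}$ (pairs straddling both $x_x$ and $x_y$ are allowed), which is precisely your second formulation ``$\pi$ restricted to that interval is already a pairing''.
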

\begin{proof}
Consider a non-crossing pairing $\pi \in NC_2(r)$. We begin by constructing the path of $\Gamma(r)$ associated to $\pi$. An element $k$ of $\{1, \ldots, |r|\}$ is said to be an \emph{opener} (for $\pi$) if it appears first in its block (pair) of $\pi$. Otherwise, it is called a \emph{closer}. For $1 \leq i \leq p$, let $B_i = f_r^{-1}(i)$. As $\pi$ is non-crossing and it does not contain pairs with both ends in $B_i$, the closers appear before the openers in each $B_i$. Let $s_i$ be the number of closers of  $B_i$ and $t_i$ be the number of openers of $B_i$. We have $s_i + t_i = r_i$. Define $\gamma:\{0, 1, \ldots, p\} \to \Z$ by $\gamma(0) = \gamma(p) = 0$ and $\gamma(i) - \gamma(i-1) = t_i - s_i$, for all $1 \leq i \leq p$; we have thus $\gamma(i) - \gamma(i-1) \in \Delta(r_i)$. The value of $\gamma(i)$ is the number of \emph{open} pairs after the first $i$ groups of $\pi$. Hence, for all $i \geq 1$, $\gamma(i-1) - s_i \geq 0$. This implies $\gamma(i) + \gamma(i-1) \geq r_i$, and thus $\gamma$ is an $r$-path. In order to prove the other direction, note that a pairing $\pi \in NC_2(r)$ can be reconstructed by knowing only the number of openers/closers in each block $B_i$. This information can easily be deduced from an $r$-path $\gamma$.

The proof that the construction above is a bijection between the set of irreducible $r$-paths $\Gamma^*(r)$ and $NC_2^*(r)$ is cumbersome; we shall just give the main idea. Again, let $\pi$ be a pairing of $NC_2(r)$. The condition $\pi \vee \hat 1_r = \hat 1_{|r|}$ amounts to the fact that the standard graphical representation of  $\pi$ and $\hat 1_r$ on the same figure ($\hat 1_r$ can drawn by connecting the points of each of its groups by horizontal lines) is a connected graph. If it is not the case, then the sub-graph of a connected component corresponds to a strictly smaller $r$-path in the path $\gamma$  previously associated to $\pi$. 
\end{proof}

\begin{rk}
Note that for $r=\langle 1 \rangle_p$, $\Delta(1)=\{\pm 1\}$, and we recover the usual Dyck paths. For $r=\langle 2 \rangle_p$, and $p \geq 2$, it can easily be seen that $\Gamma^*(\langle 2 \rangle_p) = \{(0, 2, 2, \ldots, 2, 0)\}$, and we obtain the free cumulants of the centered Marchenko-Pastur (or free Poisson) distribution. 
\end{rk}

\subsection{A Toeplitz algebra model for $(M_r(1))_{r \geq 1}$}

In this section we provide a concrete realization of the family $(M_r(1))_{r \geq 1}$ 
by Toeplitz operators. Consider the Toeplitz algebra $\mathcal T$ of bounded linear operators on $\ell^2(\N)$ with its vacuum state $\omega(T) = \scalar{e_0}{T e_0}$. The shift operators are denoted by $S$ and $S^*$. Let $T_0 = 1$ and define, for all $r \geq 1$ the operators
\[T_r = \sum_{k=0}^{r}\underbrace{SS\cdots S}_{r-k \text{ times}}\underbrace{S^*S^*\cdots S^*}_{k \text{ times}} = S^r + S^{r-1}S^* + \cdots + S^{*r}.\]

It can be easily checked that the operators $T_r$ verify the recurrence relation of the (second kind) Chebyshev polynomials $T_1T_r = T_{r-1} + T_{r+1}$. It is well known that, under the vacuum state, the operator $T_1 = S + S^*$ has a semicircular distribution, and thus it has the same law as $M_1(1)$. We conclude that

\begin{pr}
The families $(T_r)_r \in (\mathcal T, \omega)$ and $(M_r(1))_r \in (\mcA, \phi)$ have the same distribution.
\end{pr}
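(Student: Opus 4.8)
The plan is to show that the two families have the same joint distribution by checking that they satisfy the \emph{same} algebraic relations together with an \emph{identification of one generator}. Concretely, both $(M_r(1))_{r\ge 1}$ and $(T_r)_{r\ge 1}$ are, by construction, polynomial images of a single element ($M_1(1)$ and $T_1=S+S^*$ respectively) obtained via the Chebyshev recurrence; so if I can verify that $T_1$ has the same distribution as $M_1(1)$ under the relevant state, and that $T_r=U_r(T_1)$ (with the normalization $t=1$), then the proposition follows immediately from Theorem~\ref{lim.distri.12.01.08}, which says precisely that $M_r(1)=U_r(M_1(1))$ and that $M_1(1)$ is semicircular of variance $1$.

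First I would record that $T_0=1=U_0(T_1)$ and $T_1=U_1(T_1)$, and then check the recursion $T_1T_r=T_{r-1}+T_{r+1}$ for all $r\ge 1$ by a direct computation with the shift operators on $\ell^2(\N)$: writing $T_r=\sum_{k=0}^{r}S^{\,r-k}(S^*)^{k}$ and using $S^*S=1$ (but $SS^*\ne 1$, so one must be careful with the single ``boundary'' term where a $S^*$ meets the vacuum side), one obtains $T_1T_r=\sum_{k=0}^{r}S^{\,r+1-k}(S^*)^{k}+\sum_{k=0}^{r}S^{\,r-k}(S^*)^{k+1}$, and after reindexing the first sum contributes $T_{r+1}$ plus the lone term $(S^*)^{r+1}$-type correction, while the second sum contributes $T_{r-1}$ together with the compensating term, the two boundary discrepancies cancelling because $S^*S=1$. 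Hence $T_1T_r=T_{r-1}+T_{r+1}$, which is exactly the Chebyshev-$U$ recursion, so by induction $T_r=U_r(T_1)$ for every $r\ge 0$.

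Next I would invoke the classical fact (Voiculescu; see the standard references on the full Fock space and the Toeplitz/Cuntz algebra) that under the vacuum state $\omega(\cdot)=\langle e_0,\cdot\, e_0\rangle$ the operator $S+S^*$ is a standard semicircular element of variance $1$: its moments $\omega((S+S^*)^{n})$ count the Dyck paths of length $n$, equivalently the noncrossing pair partitions of $\{1,\dots,n\}$, which are the moments of the semicircle law on $[-2,2]$. Since Theorem~\ref{lim.distri.12.01.08} identifies the limiting $M_1(1)$ as a semicircular element of variance $1$ as well, the single-variable distributions of $T_1$ and $M_1(1)$ coincide. Combining this with $T_r=U_r(T_1)$ and $M_r(1)=U_r(M_1(1))$, the $*$-distribution of the whole family $(T_r)_r$ under $\omega$ equals that of $(M_r(1))_r$ under $\phi$, since any mixed moment $\omega(T_{r_1}\cdots T_{r_k})$ is a fixed polynomial in the moments of $T_1$, the same polynomial that expresses $\phi(M_{r_1}(1)\cdots M_{r_k}(1))$ in the moments of $M_1(1)$. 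This proves the proposition.

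The only mildly delicate point is the boundary bookkeeping in the relation $T_1T_r=T_{r-1}+T_{r+1}$: because $SS^*$ is the projection onto $e_1,e_2,\dots$ rather than the identity, one must confirm that the ``missing'' rank-one term in $SS^*$ is exactly cancelled by the term produced when an $S^*$ in $T_r$ hits the extra $S$ from $T_1$, so that no spurious operator survives. This is a short and standard computation (it is the same mechanism that makes $S+S^*$ semicircular rather than having a different law), and I would present it in one or two displayed lines rather than in full generality. Everything else is immediate from Theorem~\ref{lim.distri.12.01.08} and the characterization of the vacuum distribution of $S+S^*$.
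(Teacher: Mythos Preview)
Your approach is exactly the paper's: check the Chebyshev recurrence $T_1T_r=T_{r-1}+T_{r+1}$, use that $S+S^*$ is standard semicircular under the vacuum state, and conclude via Theorem~\ref{lim.distri.12.01.08}. One small caution on the displayed computation: the second sum you wrote, $\sum_{k}S^{r-k}(S^*)^{k+1}$, is $T_rS^*$ rather than $S^*T_r$; the correct split is $ST_r=T_{r+1}-(S^*)^{r+1}$ and $S^*T_r=T_{r-1}+(S^*)^{r+1}$ (using $S^*S=1$ to push the leading $S^*$ through the $S$-block), after which the boundary terms cancel and your conclusion stands.
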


\begin{rk} Note that we can also  realize 
 the whole family $(M_r(t))_{r \geq 1, t\in [0,+\infty)}$ on the full Fock space of the Hilbert space $L^2([0,+\infty),dx)$ with the operators (here, $\ell$ denotes the creation operator)
\[T_r(t) = \sum_{k=0}^{r}\underbrace{\ell(\indic_{[0,t)})\cdots \ell(\indic_{[0,t)})}_{r-k \text{ times}}\underbrace{\ell^*(\indic_{[0,t)})\cdots \ell^*(\indic_{[0,t)})}_{k \text{ times}} \in \mathcal B (\mathcal F(L^2([0,+\infty),dx))).\]
\end{rk}

It can be insightful to look at the matrix representations of the operators $T_r$. It can be easily verified that the $(i,j)$ coefficient of $T_r$, $T_r(i,j) = \scalar{ e_i}{T_r e_j}$ is null, unless
\begin{itemize}
\item $j-i \in \Delta(r) = \{r, r-2, \ldots, -r\}$ and
\item $j+i \geq r$,
\end{itemize}
in which case it equals $1$.

This matrix point of view introduces the connection with the set $\Gamma(r)$:
\[\phi(M_{r_1}M_{r_2}\cdots M_{r_p}) = \omega(T_{r_1}T_{r_2}\cdots T_{r_p}) = [T_{r_1}T_{r_2}\cdots T_{r_p}](0,0) = \]
\[ = \sum_{ i_0=0, i_1, \ldots, i_p = 0} T_{r_1}(i_0, i_1)T_{r_2}(i_1, i_2)\cdots T_{r_p}(i_{p-1}, i_p).\]
In order for the general term of the above sum to be non-zero, it has to be that each factor is 1, and that amounts to the fact that $\gamma = (i_0, i_1, \ldots, i_p) \in \Gamma(r)$,
providing a connection with the generalized Dyck paths discussed earlier.

\subsection{Non-commutative invariants and semi-standard Young tableaux}

In this section we show that the combinatorics of the family $(M_r)_r$ is related to semi-standard Young tableaux, which have been shown to count the number of non-commutative classical invariants of binary forms \cite{teranishi}. Here, we prove only a combinatorial result; whether there is a more profound reason for this, we ignore at this moment and connections with the representation theory of $SL_2(\C)$, $GL(n)$ or $S_n$ are to be explored.

Start by fixing a vector $r=(r_1, \ldots, r_p)$ such that $|r|$ is even and consider the Young diagram with 2 rows and $|r|/2$ columns associated to the partition $\lambda = (|r|/2, |r|/2)$ of $|r|$. A semi-standard Young tableau of shape $\lambda$ and weight $r$ is a numbering of the Young diagram of shape $\lambda$ with $r_1$ 1's, $r_2$ 2's, $\ldots$, $r_p$ $p$'s such that the rows are not decreasing and the columns are increasing. Let $c(r)$ be the number of such semi-standard Young tableaux. 

\begin{pr}
$c(r) = \card NC_2(r)$.
\end{pr}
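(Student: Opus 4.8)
The statement to prove is $c(r) = \card NC_2(r)$, where $c(r)$ counts semi-standard Young tableaux (SSYT) of shape $\lambda=(|r|/2,|r|/2)$ and weight $r$, and $NC_2(r)$ is the set of non-crossing pairings of $\{1,\dots,|r|\}$ whose blocks never join two points with the same $f_r$-value. The natural approach is to exhibit an explicit bijection between these two finite sets. The key observation is that a two-row SSYT of this shape is determined by the content of its first (top) row: since each column must be strictly increasing and the top row weakly increasing, the top row is a weakly increasing word of length $|r|/2$ in the alphabet $\{1,\dots,p\}$, and the bottom row is forced to be the complementary multiset arranged in weakly increasing order, subject to the column-strictness constraint. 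Thus $c(r)$ equals the number of ways to choose, for each $i$, how many copies $t_i$ of the letter $i$ go in the top row (and hence $s_i = r_i - t_i$ go in the bottom row), such that the resulting filling is column-strict.

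\textbf{Main steps.} First I would make precise the correspondence between an SSYT of shape $(|r|/2,|r|/2)$ and weight $r$ and a vector $(t_1,\dots,t_p)$ with $0 \le t_i \le r_i$ and $\sum t_i = |r|/2$: reading the top row left to right gives $t_1$ ones, then $t_2$ twos, etc.; the bottom row is then $s_1$ ones, $s_2$ twos, etc.\ where $s_i = r_i - t_i$. The column-strictness condition $(\text{bottom entry}) > (\text{top entry})$ in every column translates, after a short counting argument comparing prefix sums, exactly into the inequalities $s_1 + s_2 + \cdots + s_i \le t_1 + t_2 + \cdots + t_{i-1}$ for all $i$ (equivalently $\gamma(i-1) - s_i \ge 0$ where $\gamma(i) = \sum_{j\le i}(t_j - s_j)$); this is the standard ballot-type criterion for when the naive row-filling is a valid SSYT. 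Second, I would recognize that these are precisely the conditions defining a Dyck $r$-path in the sense of Proposition \ref{pr:paths}: the vector $\gamma$ with $\gamma(0)=\gamma(p)=0$, $\gamma(i)-\gamma(i-1) = t_i - s_i \in \Delta(r_i)$, and $\gamma(i)+\gamma(i-1) = 2\gamma(i-1) - (t_i - s_i) + 2(t_i-s_i)\ \ge\ r_i$ is exactly an element of $\Gamma(r)$. So the map ``SSYT $\mapsto$ sequence of top-row letter counts $\mapsto$ path $\gamma$'' is a bijection from SSYT of shape $\lambda$ and weight $r$ onto $\Gamma(r)$. Third, I would invoke Proposition \ref{pr:paths}, which already establishes a bijection $\Gamma(r) \leftrightarrow NC_2(r)$ (via the opener/closer counts $s_i,t_i$ in each block $B_i$). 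Composing the two bijections yields $c(r) = \card \Gamma(r) = \card NC_2(r)$.

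\textbf{Expected obstacle.} The only genuinely delicate point is verifying that the column-strictness of the SSYT is \emph{equivalent} to the path inequalities $\gamma(i)+\gamma(i-1)\ge r_i$, not merely implied by them. Concretely: given counts $(t_i)$, one fills the top row with the multiset $\{1^{t_1}2^{t_2}\cdots\}$ in order and the bottom row with $\{1^{s_1}2^{s_2}\cdots\}$ in order, and one must check that ``in every column the bottom letter strictly exceeds the top letter'' holds iff every prefix of the bottom row contains strictly fewer letters $\le i$ than... — more precisely, column $c$ has top entry $= \min\{i : t_1+\cdots+t_i \ge c\}$ and bottom entry $= \min\{i : s_1+\cdots+s_i \ge c\}$, and strict inequality for all $c=1,\dots,|r|/2$ is equivalent to $s_1+\cdots+s_i < t_1+\cdots+t_i$ being ``not yet enough'' relative to... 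I would carry this out carefully by a direct prefix-sum comparison, which is routine but must be stated cleanly. Everything else is bookkeeping, and the heavy lifting (the path $\leftrightarrow$ pairing bijection) is already done in Proposition \ref{pr:paths}. I would also remark, as a consistency check, that for $r = \langle 1\rangle_p$ with $p$ even this recovers the classical fact that standard Young tableaux of shape $(p/2,p/2)$ are counted by the Catalan number $C_{p/2}$, matching $\card NC_2(\langle 1\rangle_p)$.
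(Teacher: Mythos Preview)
Your approach is correct and essentially the same as the paper's: the paper builds the bijection $NC_2(r)\to\mathrm{SSYT}$ directly by reading off the opener/closer counts $(s_i,t_i)$ in each block $B_i$ and filling the tableau row by row, which is exactly the composition of the two bijections you describe (pairing $\to$ path via Proposition~\ref{pr:paths}, then path $\to$ SSYT via the top-row letter counts). The only difference is organizational: you make the factorization through $\Gamma(r)$ explicit and invoke Proposition~\ref{pr:paths}, whereas the paper recapitulates the opener/closer argument in line; your version is arguably cleaner since it reuses the already-proved path bijection, and your identification of the column-strictness condition with the prefix-sum inequality $s_1+\cdots+s_i\le t_1+\cdots+t_{i-1}$ (equivalently $\gamma(i-1)\ge s_i$, i.e.\ $\gamma(i)+\gamma(i-1)\ge r_i$) is exactly what is needed.
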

\begin{proof}
We shall construct a bijection between the set of non-crossing pairings of $NC_2(r)$ and the set of semi-standard Young tableaux of weight $r$. Start with a pairing $\pi \in NC_2(r)$. We shall add numbers in the empty Young diagram group by group. When we arrive at the $i$-th group of $\pi$, start by appending $s_i$ $i$'s to the second row, corresponding to the $s_i$ closing pairs of the $i$-th group. Then add the remaining $t_i$ $i$'s to the top row - these are the $t_i$ opening pairs. In this way we are sure to get a row non-decreasing numbering. The fact that the columns are increasing follows from the fact that at each moment, the number of opened pairs of $\pi$ is larger or equal than the number of closed pairs. Thus the top row is always more occupied then the bottom row. 
\end{proof}
\begin{rk}
As we did for the paths, we can prove a bijection between $NC_2^*(r)$ and a strict subset of semi-standard Young tableaux. However, this is stricter than the notion of ``indecomposable'' Young tableaux, defined in \cite{teranishi}.
\end{rk}

\end{document}